\documentclass[10pt]{amsart}

\usepackage[colorlinks]{hyperref}
\usepackage{color,graphicx,shortvrb}
\usepackage[latin 1]{inputenc}

\usepackage[active]{srcltx} %SRC Specials for DVI search

\usepackage{enumerate}
\usepackage{amssymb}

\newtheorem{theorem}{Theorem}[section]

\newtheorem{corollary}[theorem]{Corollary}

\newtheorem{lemma}[theorem]{Lemma}

\newtheorem{proposition}[theorem]{Proposition}
\newtheorem{remark}[theorem]{Remark}

\def\11{\textbf{$1$}}

\DeclareGraphicsExtensions{.jpg,.pdf,.png,.eps}

%%usepackage{showkeys}
%%%%%%%%%%%%%%%%%%%%%%%%%%%%%%%%%%%%%%%%%%%%%%%%%%%%%%%%%%%%%%%%%%%%%%%%%%%%%%%%%%%%%%%%%%%%%%%%%%%

%\usepackage[active]{srcltx} %SRC Specials for DVI search
%%%%%%%%%%%%%%%%%%%%%%%%%%%%%%%%%%%%%%%%%%%%%%%%%%%%%%%%%%%%%%%%%%%%%%%%%%%%%

\begin{document}

\title[The Mazur-Ulam property for the space of complex null sequences]{The Mazur-Ulam property for the space of complex null sequences}

\author[A. Jim{\'e}nez-Vargas]{Antonio Jim{\'e}nez-Vargas}
\author[A. Morales]{Antonio Morales Campoy}
\address[A. Jim{\'e}nez-Vargas, A. Morales, M.I. Ram\'{i}rez]{Departamento de Matem{\'a}ticas, Universidad de Almer{\'i}a, 04120,
Almer{\i}a, Spain}
\email{ajimenez@ual.es, amorales@ual.es, mramirez@ual.es}

\author[A.M. Peralta]{Antonio M. Peralta}

\address[A.M. Peralta]{Departamento de An{\'a}lisis Matem{\'a}tico, Facultad de
Ciencias, Universidad de Granada, 18071 Granada, Spain.}
\email{aperalta@ugr.es}

\author[M.I. Ram\'{i}rez]{Mar{\'\i}a Isabel Ram{\'\i}rez}
%\address{Departamento de Matem{\'a}ticas, Universidad de Almer{\'i}a, 04071,
%Almer{\i}a, Spain}
%\email{mramirez@ual.es}

%\thanks{}

\subjclass[2010]{Primary  46B20, 46A22, 46B04, 46B25.}

\keywords{Tingley's problem; Mazur-Ulam property; extension of isometries}

\date{}

\begin{abstract} Given an infinite set $\Gamma$, we prove that the space of complex null sequences $c_0(\Gamma)$ satisfies the Mazur-Ulam property, that is, for each Banach space $X$, every surjective isometry from the unit sphere of $c_0(\Gamma)$ onto the unit sphere of $X$ admits a (unique) extension to a surjective real linear isometry from $c_0(\Gamma)$ to $X$. We also prove that the same conclusion holds for the finite dimensional space $\ell_{\infty}^m$.
\end{abstract}

\maketitle
\thispagestyle{empty}

\section{Introduction}

The {Mazur-Ulam property} is intrinsically linked to the so-called \emph{Tingley's problem}. The latter problem has been intensively studied during the last thirty years, and asks whether a surjective isometry $\Delta$ between the unit spheres, $S(X)$ and $S(Y)$, of two normed spaces $X$ and $Y$ can be extended to a surjective real linear isometry from $X$ to $Y$. Tingley's problem has been reveled as a difficult problem which remains unsolved. A wide list of positive solutions to Tingley's problem for concrete Banach spaces includes sequence spaces (see \cite{Ding2002, Di:p, Di:C, Di:8, Di:1, Ding07,LiRen2008, Liu2007}), finite dimensional polyhedral spaces \cite{KadMar2012}, $C_0(L)$ spaces \cite{Wang}, finite dimensional C$^*$-algebras and finite von Neumann algebras (see \cite{Tan2016-2, Tan2016, Tan2016preprint}), spaces of compact operators, compact C$^*$-algebras and weakly compact JB$^*$-triples (cf. \cite{PeTan16,FerPe17}), type I von Neumann factors, atomic von Neumann algebras and atomic JBW$^*$-triples (see \cite{FerPe17b, FerPe17c}), and spaces of trace class operators \cite{FerGarPeVill17}.\smallskip

In general, a surjective linear isometry between the unit spheres of two complex Banach spaces need not admit an extension to a surjective complex linear or conjugate linear isometry between the spaces. For example $\Delta : S( \mathbb{C}\oplus^{\infty}\mathbb{C})\to S( \mathbb{C}\oplus^{\infty}\mathbb{C})$, $\Delta (\lambda_1,\lambda_2) := (\lambda_1,\overline{\lambda_2})$ cannot be extended to a complex linear nor to a conjugate linear isometry on $\mathbb{C}\oplus^{\infty}\mathbb{C}$. Due to these reasons most of the studies are restricted to real Banach spaces, real sequence spaces, and spaces of real-valued measurable functions. However, the results for $C(K)$ spaces and the recent progress for spaces of compact operators, $B(H)$ spaces and atomic von Neumann algebras reveal the importance, validity, and difficulty of the case of complex Banach spaces.\smallskip

Following \cite{ChenDong2011}, we shall say that a Banach space $Z$ satisfies the \emph{Mazur-Ulam property} if every surjective isometry from the unit sphere of $Z$ to the unit sphere of any Banach space $Y$ admits a (unique) extension to a surjective real linear isometry from $Z$ onto $Y$. A pioneering contribution due to G.G. Ding proves that the space $c_0(\mathbb{N},{\mathbb{R}})$ of all null sequences of real numbers satisfies the Mazur-Ulam property (cf. \cite[Corollary 2]{Ding07}). Additional examples of Banach spaces satisfying the Mazur-Ulam property were provided by R. Liu and X.N. Fang and J.H. Wang. The list includes $c (\Gamma,\mathbb{R})$, $c_{0} (\Gamma,\mathbb{R})$, $\ell_{\infty} (\Gamma,\mathbb{R}),$ and the space $C(K,\mathbb{R})$ of all real-valued continuous functions on an arbitrary compact metric space $K$ (see \cite[Main Theorem and Corollary 6]{Liu2007} and \cite[Theorem 3.2]{FangWang06} for the result concerning $C(K,\mathbb{R})$). D. Tan showed that the spaces $L^{p}((\Omega, \Sigma, \mu), \mathbb{R}),$ of real-valued measurable functions on an arbitrary $\sigma$-finite measure space $(\Omega, \Sigma, \mu),$ satisfy the Mazur-Ulam property for all $1\leq p\leq \infty$ \cite{Ta:1,Ta:8,Ta:p}. Other references dealing with the Mazur-Ulam property can be found in \cite{Li2016,THL} and \cite{TL}.\smallskip

All previous examples of Banach spaces satisfying the Mazur-Ulam property are real sequence spaces and spaces of real-valued continuous or measurable functions.
However, it is an open and intriguing problem whether the spaces $\ell_{\infty}(\Gamma)$, $c_0(\Gamma)$ and $c(\Gamma)$ of complex sequences satisfy the Mazur-Ulam property or not. The same question is also open for spaces of complex-valued continuous functions on a compact metric space and for complex-valued measurable functions. Practically nothing is known in the complex setting. In this note we establish the first result in this direction by proving that the space $c_0(\Gamma)$ satisfies the Mazur-Ulam property, that is, for each Banach space $X$ every surjective isometry $\Delta: S(c_0(\Gamma))\to S(X)$ admits a unique extension to a surjective real linear isometry from $c_0(\Gamma)$ onto $X$ (see Theorem \ref{t c0 satisfies the Mazur-Ulam property}).\smallskip

The technical results and arguments developed to prove the Mazur-Ulam property for $c_0(\Gamma)$ are also valid for $\ell_\infty^m,$ and consequently $\ell_\infty^m$ satisfies the Mazur-Ulam property too.

\section{Supports and maximal convex subsets of the unit sphere}

Throughout this note, $X$ will be a complex Banach space, $\mathcal{B}_X$ will denote the closed unit ball of $X$, and $\Gamma$ will be an infinite set (equipped with the discrete topology). Following the standard notation, $c_0(\Gamma)$ will denote the Banach space of all functions $x:\Gamma \to \mathbb{C}$ such that, for all $\varepsilon >0$, the set $\{ n\in \Gamma : |x(n)|\geq \varepsilon \}$ is finite, while $\ell_{\infty}(\Gamma)$ will stand for the space of bounded functions from $\Gamma$ to $\mathbb{C}$. In the finite dimensional case, we shall write $\ell_{\infty}^m$ for $(\mathbb{C}^m,\|\cdot\|_\infty)$ with $m\in\mathbb{N}$. The symbol $\mathcal{L}^{\infty} (\Gamma)$ will stand for any of the spaces of complex sequences $\ell_{\infty}(\Gamma)$ or $c_0(\Gamma)$ equipped with the supremum norm. \smallskip

Henceforth, given an element $x\in \mathcal{L}^{\infty} (\Gamma),$ the symbol $x(k)$ will denote the $k$th component of $x$. For each $n\in \Gamma$ and each $\lambda\in \mathbb{T},$ we set $$A(n,\lambda):=\{ x\in S(\mathcal{L}^{\infty} (\Gamma)) : x(n) = \lambda\}$$ and $$\hbox{Pick}(n,\lambda) :=\{ x\in A(n,\lambda) : |x(k)|<1, \ \forall k\neq n\}.$$ Then $A(n,\lambda)$ is a maximal weak$^*$-closed proper face of $\mathcal{B}_{\mathcal{L}^{\infty} (\Gamma)}$ and a maximal convex subset of $S(\mathcal{L}^{\infty} (\Gamma))$.\smallskip

Our first result gathers a key property for later purposes.

\begin{lemma}\label{l existence of support functionals for the image of a face ellinfty} Let $\Delta : S(\mathcal{L}^{\infty} (\Gamma))\to S(X)$ be a surjective isometry. Then, for each $n\in \Gamma$ and each $\lambda\in \mathbb{T},$ the set $$\hbox{\rm supp}(n,\lambda) := \{\varphi\in X^* : \|\varphi\|=1,\hbox{ and } \varphi^{-1} (\{1\}) = \Delta(A(n,\lambda)) \}$$ is a non-empty weak$^*$-closed face of $\mathcal{B}_{\mathcal{L}^{\infty} (\Gamma)}$.
\end{lemma}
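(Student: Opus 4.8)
The plan is to set $C := \Delta(A(n,\lambda))$ and to deduce every assertion from the single structural fact that $C$ is a maximal convex subset of $S(X)$. Since $A(n,\lambda)$ is a maximal convex subset of $S(\mathcal{L}^{\infty} (\Gamma))$ and $\Delta$ is a surjective isometry between unit spheres, the known result that such isometries carry maximal convex subsets onto maximal convex subsets (cf. \cite{ChenDong2011}) gives that $C$ is a maximal convex subset of $S(X)$. I regard this step as the main external input; the rest of the argument is elementary convex analysis. As a preliminary remark, $\overline{C}$ is convex and contained in $S(X)$ (the sphere is closed and the norm continuous), so maximality forces $C=\overline{C}$, and in particular $C$ is closed. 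Throughout, for $\varphi\in S(X^*)$ the symbol $\varphi^{-1}(\{1\})$ denotes the face $\{x\in\mathcal{B}_X:\varphi(x)=1\}$, which is contained in $S(X)$, and the target space of the claimed face is $\mathcal{B}_{X^*}$.

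First I would prove that $\mathrm{supp}(n,\lambda)$ is non-empty by producing a supporting functional. The convex set $C$ is disjoint from the open unit ball of $X$, so the geometric Hahn--Banach theorem provides a non-zero $\varphi\in X^*$ and a real constant $c$ with $\mathrm{Re}\,\varphi(u)\le c\le \mathrm{Re}\,\varphi(x)$ for every $u$ in the open ball and every $x\in C$. Taking the supremum over the open ball gives $\|\varphi\|\le c$, while $\mathrm{Re}\,\varphi(x)\le\|\varphi\|$ for $x\in C\subseteq\mathcal{B}_X$ gives $c\le\|\varphi\|$; hence $c=\|\varphi\|$ and $\mathrm{Re}\,\varphi(x)=\|\varphi\|$ for all $x\in C$. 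Normalising so that $\|\varphi\|=1$, and using that $|\varphi(x)|\le 1$ together with $\mathrm{Re}\,\varphi(x)=1$ forces $\varphi(x)=1$, we obtain $C\subseteq\varphi^{-1}(\{1\})$. As $\varphi^{-1}(\{1\})$ is a convex subset of $S(X)$ containing the maximal convex set $C$, maximality yields $\varphi^{-1}(\{1\})=C$, so $\varphi\in\mathrm{supp}(n,\lambda)$.

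Next I would establish the identity $\mathrm{supp}(n,\lambda)=\{\varphi\in\mathcal{B}_{X^*}:\varphi(x)=1\ \text{for all }x\in C\}$, which makes the two remaining properties transparent. The inclusion $\subseteq$ is immediate. For $\supseteq$, if $\varphi\in\mathcal{B}_{X^*}$ satisfies $\varphi\equiv 1$ on the non-empty set $C\subseteq S(X)$, then $\|\varphi\|=1$ and, exactly as above, $\varphi^{-1}(\{1\})$ is a convex subset of $S(X)$ containing $C$, whence $\varphi^{-1}(\{1\})=C$ by maximality and $\varphi\in\mathrm{supp}(n,\lambda)$. Weak$^*$-closedness then follows at once, since $\mathrm{supp}(n,\lambda)=\mathcal{B}_{X^*}\cap\bigcap_{x\in C}\{\varphi\in X^*:\varphi(x)=1\}$ is an intersection of the weak$^*$-closed ball with the weak$^*$-closed hyperplanes determined by the weak$^*$-continuous evaluations at the points of $C$.

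Finally I would check that $\mathrm{supp}(n,\lambda)$ is a face of $\mathcal{B}_{X^*}$. Suppose $\varphi=t\psi_1+(1-t)\psi_2$ with $\psi_1,\psi_2\in\mathcal{B}_{X^*}$, $t\in(0,1)$, and $\varphi\in\mathrm{supp}(n,\lambda)$. For each $x\in C$ we have $1=\varphi(x)=t\psi_1(x)+(1-t)\psi_2(x)$ with $|\psi_i(x)|\le 1$; since $1$ is an extreme point of the closed unit disc of $\mathbb{C}$, it follows that $\psi_1(x)=\psi_2(x)=1$. Thus each $\psi_i$ is identically $1$ on $C$, so $\psi_i\in\mathrm{supp}(n,\lambda)$ by the identity above, proving that $\mathrm{supp}(n,\lambda)$ is a face. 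The only genuinely non-trivial ingredient is the preservation of maximal convex subsets under $\Delta$; granting it, all three conclusions reduce to the extremality of the scalar $1$ and to the maximality of $C$.
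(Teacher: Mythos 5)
Your proposal is correct and follows essentially the same route as the paper: the key input in both is that $\Delta$ maps the maximal convex set $A(n,\lambda)$ onto a maximal convex subset of $S(X)$ (Cheng--Dong/Tanaka), and your explicit Hahn--Banach separation of $\Delta(A(n,\lambda))$ from the open unit ball is exactly the application of Eidelheit's separation theorem that the paper cites. The remaining points you verify (the description of $\mathrm{supp}(n,\lambda)$ as $\{\varphi\in\mathcal{B}_{X^*}:\varphi\equiv 1\hbox{ on }\Delta(A(n,\lambda))\}$, weak$^*$-closedness, and the face property via extremality of $1$ in the disc) are precisely the details the paper leaves to the reader, and you correctly read the face as sitting in $\mathcal{B}_{X^*}$ rather than $\mathcal{B}_{\mathcal{L}^{\infty}(\Gamma)}$.
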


\begin{proof} The set $A(n,\lambda)$ is a maximal convex subset of $S(\mathcal{L}^{\infty} (\Gamma))$. Therefore, it follows from \cite[Lemma 5.1$(ii)$]{ChenDong2011} or \cite[Lemma 3.5]{Tan2014} that $\Delta(A(n,\lambda)) $ is a maximal convex subset of $X$. Thus, by Eidelheit's separation Theorem \cite[Theorem 2.2.26]{Megg98} there is a norm-one functional $\varphi\in X^*$ such that $\varphi^{-1} (\{1\}) = \Delta(A(n,\lambda))$ (compare \cite[Lemma 3.3]{Tan2016}). The rest can be straightforwardly checked by the reader.\smallskip
\end{proof}

We shall isolate next a property which was essentially shown in \cite[Lemmas 3.1 and 3.5]{FangWang06}, we include here an argument for completeness reasons. Following standard notation (compare \cite{ChenDong2011,FangWang06}), given a norm-one element $x$ in a Banach space $X$, we shall denote by St$(x)$ the \emph{star-like subset of $S(X)$ around $x$}, that is, the set given by $$\hbox{St} (x) :=\{y\in S(X) : \|x+ y \|=2\}.$$ It is known that St$(x)$ is precisely the union of all maximal convex subsets of $S(X)$ containing $x,$ and coincides with the set of all $y\in X$ such that the set $[x,y]:=\{t x + (1-t) y : t\in [0,1]\}$ is contained in $S(X)$.

\begin{lemma}\label{l face -1 face ellinfty} Let $\Delta : S(\mathcal{L}^{\infty} (\Gamma))\to S(X)$ be a surjective isometry. Then for each $n$ in $\Gamma$ and each $\lambda$ in $\mathbb{T}$ we have $\varphi \Delta(x) = -1$ for every $x$ in $A(n,-\lambda)$ and every $\varphi$ in $\hbox{\rm supp}(n,\lambda)$.
\end{lemma}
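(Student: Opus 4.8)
The plan is to prove the sharper statement $\Delta(A(n,-\lambda)) = \varphi^{-1}(\{-1\})$ for every fixed $\varphi\in\mathrm{supp}(n,\lambda)$; once this is established, every $x\in A(n,-\lambda)$ satisfies $\Delta(x)\in\varphi^{-1}(\{-1\})$, which is exactly the asserted equality $\varphi\Delta(x)=-1$. The device I would use is a distinguished peak element: let $u_0\in\mathcal{L}^{\infty}(\Gamma)$ be given by $u_0(n)=\lambda$ and $u_0(k)=0$ for all $k\neq n$, so that $u_0\in\mathrm{Pick}(n,\lambda)$ and $-u_0\in\mathrm{Pick}(n,-\lambda)$. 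The first thing I would record is that $\mathrm{St}(u_0)=A(n,\lambda)$ and $\mathrm{St}(-u_0)=A(n,-\lambda)$. The inclusion $\supseteq$ is immediate from the triangle inequality, and for $\subseteq$ one notes that, since all coordinates of $u_0$ other than the $n$th vanish, for any $y\in S(\mathcal{L}^{\infty}(\Gamma))$ one has $|(u_0+y)(k)|=|y(k)|\le 1$ for $k\neq n$; hence $\|u_0+y\|=2$ can only be realised at the coordinate $n$, forcing $y(n)=\lambda$. In particular $u_0$ and $-u_0$ each lie in a \emph{unique} maximal convex subset of the sphere.

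The heart of the argument is to locate the antipode of $\Delta(-u_0)$ inside $\Delta(A(n,\lambda))$. I would set $v:=\Delta^{-1}\big(-\Delta(-u_0)\big)\in S(\mathcal{L}^{\infty}(\Gamma))$, which is well defined because $\Delta$ is a surjective isometry. From $\Delta(v)=-\Delta(-u_0)$ we obtain $\|\Delta(v)-\Delta(-u_0)\|=2\|\Delta(-u_0)\|=2$, and applying the isometry $\Delta^{-1}$ gives $\|v+u_0\|=\|v-(-u_0)\|=2$. Therefore $v\in\mathrm{St}(u_0)=A(n,\lambda)$, whence $\Delta(v)\in\Delta(A(n,\lambda))=\varphi^{-1}(\{1\})$ and so $\varphi\Delta(v)=1$. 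Since $\Delta(v)=-\Delta(-u_0)$, this says precisely that $\varphi\Delta(-u_0)=-1$.

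It remains to propagate this single equality to the whole face. By the argument of Lemma~\ref{l existence of support functionals for the image of a face ellinfty}, the set $\Delta(A(n,-\lambda))$ is a maximal convex subset of $S(X)$, and it is the unique one containing $\Delta(-u_0)$, because $-u_0$ lies in a unique maximal convex subset and $\Delta$ carries maximal convex subsets bijectively onto maximal convex subsets. On the other hand $\varphi^{-1}(\{-1\})=-\varphi^{-1}(\{1\})=-\Delta(A(n,\lambda))$ is also a maximal convex subset of $S(X)$ (the image of one under the isometry $z\mapsto -z$), and the relation $\varphi\Delta(-u_0)=-1$ just proved says exactly that $\Delta(-u_0)$ belongs to it. By the uniqueness these two maximal convex subsets coincide, so $\Delta(A(n,-\lambda))=\varphi^{-1}(\{-1\})$, and since $\varphi\in\mathrm{supp}(n,\lambda)$ was arbitrary the lemma follows.

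The only delicate point, and the reason for choosing $u_0$ supported solely at $n$ rather than an arbitrary member of $\mathrm{Pick}(n,\lambda)$, is the identity $\mathrm{St}(u_0)=A(n,\lambda)$. For a general element of $\mathrm{Pick}(n,\lambda)$ in $\ell_{\infty}(\Gamma)$ the supremum over the remaining coordinates of $|u_0(k)+y(k)|$ could tend to $2$ without being attained, which would enlarge $\mathrm{St}(u_0)$ and destroy the uniqueness of the ambient maximal convex subset; concentrating the support of $u_0$ at $n$ removes this obstruction uniformly for both $c_0(\Gamma)$ and $\ell_{\infty}(\Gamma)$. Everything else is a formal consequence of $\Delta$ being a surjective isometry together with the already available fact that surjective isometries map maximal convex subsets of the sphere onto maximal convex subsets.
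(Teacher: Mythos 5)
Your proof is correct, and it takes a genuinely different route from the paper's. The paper argues pointwise: for arbitrary $x\in A(n,-\lambda)$ and $y=\lambda e_n$ it notes $\|x-y\|=2$, hence $-\Delta(x)\in\hbox{St}(\Delta(y))$, and then identifies $\hbox{St}(\Delta(y))=\Delta(\hbox{St}(y))=\Delta(A(n,\lambda))=\varphi^{-1}(\{1\})$ by invoking \cite[Corollary 2.2]{FangWang06}, i.e.\ the fact that a surjective isometry between unit spheres preserves the relation $\|u+v\|=2$; the conclusion $\varphi\Delta(x)=-1$ is then immediate for every $x$. You instead establish the equality only at the single point $-u_0=-\lambda e_n$ by the antipode trick (the preimage $v$ of $-\Delta(-u_0)$ satisfies $\|v+u_0\|=2$, so $v\in\hbox{St}(u_0)=A(n,\lambda)$ and $\varphi\Delta(-u_0)=-1$), and then propagate it to the whole face through facial structure: applying \cite[Lemma 5.1$(ii)$]{ChenDong2011} or \cite[Lemma 3.5]{Tan2014} to both $\Delta$ and $\Delta^{-1}$, the set $\Delta(A(n,-\lambda))$ is the \emph{unique} maximal convex subset of $S(X)$ containing $\Delta(-u_0)$, while $\varphi^{-1}(\{-1\})=-\Delta(A(n,\lambda))$ is a maximal convex subset containing that same point, so the two coincide. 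What each approach buys: the paper's proof is shorter because Fang--Wang's corollary transfers the whole star-shaped set in one stroke (and that corollary is needed again anyway, e.g.\ in Lemma \ref{l eproperties of supports ellinfty}$(b)$); yours avoids \cite[Corollary 2.2]{FangWang06} entirely, at the cost of a second application of the Cheng--Dong/Tanaka lemma, and it delivers a formally stronger conclusion, namely $\Delta(A(n,-\lambda))=\varphi^{-1}(\{-1\})$, equivalently $-\varphi\in\hbox{supp}(n,-\lambda)$. Two minor remarks: your insistence on taking $u_0$ supported only at $n$, so that $\hbox{St}(u_0)=A(n,\lambda)$ holds also in $\ell_\infty(\Gamma)$ where suprema over the remaining coordinates need not be attained, is exactly the reason the paper chooses $y\in\hbox{Pick}(n,\lambda)\cap c_0(\Gamma)$, and is indeed necessary; and the identities involving $\varphi^{-1}(\{\pm 1\})$ should, as in the paper's own Lemma \ref{l existence of support functionals for the image of a face ellinfty}, be read as equalities of intersections with $\mathcal{B}_X$, which is harmless.
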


\begin{proof} Let us take $x\in A(n,-\lambda)$ and  $\varphi \in \hbox{supp}(n,\lambda)$. We can always pick $y$ in $\hbox{Pick}(n,\lambda)\cap c_0(\Gamma)$ (take, for example $y= \lambda e_n$). Clearly $-x\in  A(n,\lambda)$, and $$\|\Delta(x)-\Delta(y) \| = \|x-y \|=2,$$ and hence $- \Delta(x)\in \hbox{St} (\Delta(y)).$\smallskip

Now we argue as in \cite[Lemma 3.1]{FangWang06} to deduce that $\hbox{St} (\Delta(y)) = \Delta (A(n,\lambda))$. Namely, $z\in \hbox{St} (\Delta(y))$ if and only if $\| z + \Delta(y)\| = 2$, which by \cite[Corollary 2.2]{FangWang06} is equivalent to $\|\Delta^{-1} (z) + y\|=2 \Leftrightarrow \Delta^{-1} (z) \in \hbox{St} (y)= A(n,\lambda).$\smallskip

Therefore $- \Delta(x)\in \hbox{St} (\Delta(y))= \Delta (A(n,\lambda)),$ and thus, by definition, we have $$- \varphi ( \Delta(x))= \varphi (- \Delta(x))= 1.$$
\end{proof}

Additional properties of the sets supp$(n,\lambda)$ are established in the next lemma.

\begin{lemma}\label{l eproperties of supports ellinfty} Let $\Delta : S(\mathcal{L}^{\infty} (\Gamma))\to S(X)$ be a surjective isometry. Then the following statements hold:\begin{enumerate}[$(a)$]
\item For every $n_0, n_1$ in $\Gamma$ with $n_0\neq n_1$ and $\lambda,\mu\in \mathbb{T},$ we have $\hbox{\rm supp}(n_0,\lambda) \cap \hbox{\rm supp}(n_1,\mu)=\emptyset;$
\item Given $\mu,\nu$ in $\mathbb{T}$ and $n_0$ in $\Gamma$, we have $\hbox{\rm supp}(n_0,\nu) \cap \hbox{\rm supp}(n_0,\mu)\neq \emptyset$ if and only if $\mu=\nu$.
\end{enumerate}
\end{lemma}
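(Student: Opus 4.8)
The plan is to reduce both assertions to a single, essentially combinatorial, observation: since $\Delta$ is a surjective isometry between unit spheres it is in particular injective, and the defining condition for membership in $\hbox{\rm supp}(n,\lambda)$ pins down the set $\Delta(A(n,\lambda))$. Concretely, if some $\varphi$ lies in $\hbox{\rm supp}(n,\lambda)\cap \hbox{\rm supp}(n',\lambda')$, then by the very definition of these sets $\Delta(A(n,\lambda)) = \varphi^{-1}(\{1\}) = \Delta(A(n',\lambda'))$; injectivity of $\Delta$ then yields $A(n,\lambda) = A(n',\lambda')$. Thus two of the sets $\hbox{\rm supp}(\cdot,\cdot)$ can meet only when the corresponding faces $A(\cdot,\cdot)$ literally coincide, and the whole lemma becomes the purely geometric claim that the assignment $(n,\lambda)\mapsto A(n,\lambda)$ is injective.

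To establish this injectivity I would simply exhibit a separating finitely supported unit vector, which is available in both $c_0(\Gamma)$ and $\ell_\infty(\Gamma)$. For part $(a)$, assume $n_0\neq n_1$ and choose $\nu\in\mathbb{T}$ with $\nu\neq\mu$; the vector $x:=\lambda e_{n_0}+\nu e_{n_1}$ satisfies $\|x\|_\infty=1$ and $x\in A(n_0,\lambda)$, while $x(n_1)=\nu\neq\mu$ forces $x\notin A(n_1,\mu)$. Hence $A(n_0,\lambda)\neq A(n_1,\mu)$ and, by the reduction above, $\hbox{\rm supp}(n_0,\lambda)\cap \hbox{\rm supp}(n_1,\mu)=\emptyset$. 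For part $(b)$, the ``if'' implication is immediate, since $\mu=\nu$ gives $\hbox{\rm supp}(n_0,\nu)=\hbox{\rm supp}(n_0,\mu)$, which is non-empty by Lemma~\ref{l existence of support functionals for the image of a face ellinfty}. For the ``only if'' implication, if $\mu\neq\nu$ then the vector $\nu e_{n_0}$ belongs to $A(n_0,\nu)$ but not to $A(n_0,\mu)$, so $A(n_0,\nu)\neq A(n_0,\mu)$ and the two supports are disjoint; contrapositively, a common functional forces $\mu=\nu$.

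I do not expect a genuine obstacle here: the argument is formal once one notices that injectivity of $\Delta$ transports everything back to the sphere of $\mathcal{L}^{\infty}(\Gamma)$, where distinct pairs $(n,\lambda)$ manifestly yield distinct faces. The only point requiring a little care is the interpretation of the condition $\varphi^{-1}(\{1\})=\Delta(A(n,\lambda))$ coming from Lemma~\ref{l existence of support functionals for the image of a face ellinfty}, but since that equality is exactly what defines membership in $\hbox{\rm supp}(n,\lambda)$, any $\varphi$ lying in two such sets automatically equates the two images $\Delta(A(\cdot,\cdot))$ regardless of interpretation. It is worth noting that one could alternatively invoke Lemma~\ref{l face -1 face ellinfty} to exclude the antipodal case $\nu=-\mu$ (a shared $\varphi$ would then have to take both the value $1$ and the value $-1$ on $\Delta(A(n_0,\nu))=\Delta(A(n_0,-\mu))$), but the injectivity argument dispatches all cases uniformly and is therefore preferable.
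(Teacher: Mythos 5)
Your proof is correct, but it follows a genuinely different route from the paper's. You read the definition of $\hbox{supp}(n,\lambda)$ as an exact set identity: a functional $\varphi$ lying in both $\hbox{supp}(n_0,\lambda)$ and $\hbox{supp}(n_1,\mu)$ forces $\Delta(A(n_0,\lambda))=\varphi^{-1}(\{1\})=\Delta(A(n_1,\mu))$, and since every isometry is injective, applying $\Delta^{-1}$ gives $A(n_0,\lambda)=A(n_1,\mu)$; the lemma then collapses to the trivial fact that distinct pairs $(n,\lambda)$ yield distinct faces, which your separating vectors ($\lambda e_{n_0}+\nu e_{n_1}$ with $\nu\neq\mu$, and $\nu e_{n_0}$) establish correctly, with the ``if'' direction of $(b)$ supplied by the non-emptiness statement in Lemma~\ref{l existence of support functionals for the image of a face ellinfty}. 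The paper argues metrically instead: for $(a)$ it takes disjointly supported $y_0,y_1\geq 0$ with $y_j(n_j)=1$ and reaches the contradiction $2=\varphi\Delta(\lambda y_0)-\varphi\Delta(-\mu y_1)\leq \|\Delta(\lambda y_0)-\Delta(-\mu y_1)\|=\|\lambda y_0+\mu y_1\|=1$, using Lemma~\ref{l face -1 face ellinfty}; for $(b)$ it observes $\|\Delta(\nu y_0)+\Delta(\mu y_0)\|=2$ and invokes \cite[Corollary 2.2]{FangWang06} to conclude $|\mu+\nu|=\|\nu y_0+\mu y_0\|=2$, hence $\mu=\nu$. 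The trade-off is worth noting: your argument leans on the full equality $\varphi^{-1}(\{1\})=\Delta(A(n,\lambda))$ built into the definition of the supports, whereas the paper's computations only ever use the inclusion $\Delta(A(n,\lambda))\subseteq\varphi^{-1}(\{1\})$, so the metric proof would survive a weakening of ``supp'' to mean functionals merely attaining the value $1$ on the image of the face; on the other hand, the paper's proof rehearses the evaluation-estimate technique that recurs in Proposition~\ref{p kernel of the support ellinfty} and the later propositions, while yours is shorter, purely set-theoretic, and needs neither Lemma~\ref{l face -1 face ellinfty} nor the Fang--Wang result. As a proof of the lemma as literally stated, yours is complete and valid.
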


\begin{proof}$(a)$ Arguing by contradiction we assume the existence of $\varphi\in \hbox{supp}(n_0,\lambda) \cap \hbox{supp}(n_1,\mu)$. Let us take two elements $y_0,y_1\in \mathcal{L}^{\infty} (\Gamma)$ such that $0\leq y_0,y_1\leq 1,$ $y_0 y_1 =0$ and $y_j (n_j)= 1$ for $j=0,1$. That is, $\lambda y_0\in A(n_0, \lambda)$ and $\mu y_1\in A(n_1, \mu)$.\smallskip

Since $- \mu y_1\in A(n_1, \mu)$, Lemma \ref{l face -1 face ellinfty} implies that  $\varphi \Delta (-\mu y_1) = -1.$ By definition $\varphi \Delta (\lambda y_0)= 1$, and then $$ 2 = \varphi \Delta (\lambda y_0) - \varphi \Delta (- \mu y_1) = |\varphi \Delta (\lambda y_0) - \varphi \Delta (- \mu y_1)  |$$ $$ \leq \| \Delta (\lambda y_0) - \Delta (- \mu y_1) \| =\|\lambda y_0+ \mu y_1\|=1,$$ which is impossible.\smallskip

$(b)$ As in $(a)$, let us take $\varphi\in \hbox{supp}(n_0,\nu) \cap \hbox{supp}(n_0,\mu),$ with $\mu \neq \nu$, and $y_0\in A(n_0,1)$. Since $\mu y_0\in A(n_0,\mu)$ and $\nu y_0\in A(n_0,\nu)$ we get $$2 = \varphi \Delta (\nu y_0) + \varphi \Delta ( \mu y_0) \leq \|\Delta (\nu y_0) + \Delta (\mu y_0) \|\leq 2.$$ By \cite[Corollary 2.2]{FangWang06} we have $2= \| \nu y_0  +  \mu y_0 \| = |\mu +\nu|,$ which holds if and only if $\mu =\nu$.
\end{proof}

Henceforth $e_n$ will denote the $n$th vector of the canonical basis of $\ell_{\infty}(\Gamma)$.

\begin{proposition}\label{p kernel of the support ellinfty} Let $\Delta : S(\mathcal{L}^{\infty} (\Gamma))\to S(X)$ be a surjective isometry. Let $n_0$ be an element in $\Gamma$ and let $\varphi$ be an element in $\hbox{\rm supp}(n_0,\lambda)$ where $\lambda\in \mathbb{T}$. Then $\varphi\Delta (x) =0$ for every $x\in S(\mathcal{L}^{\infty} (\Gamma))$ with $x(n_0)=0$. Furthermore, $|\varphi\Delta (x) |<1$ for every $x\in S(\mathcal{L}^{\infty} (\Gamma))$ with $|x(n_0)|<1.$
\end{proposition}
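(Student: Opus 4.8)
The plan is to establish the single sharper estimate $|\varphi\Delta(x)| \le |x(n_0)|$ for every $x\in S(\mathcal{L}^{\infty}(\Gamma))$, from which both assertions follow at once: taking $x(n_0)=0$ forces $\varphi\Delta(x)=0$, while $|x(n_0)|<1$ yields the strict bound $|\varphi\Delta(x)|<1$. Write $\alpha := x(n_0)$ and $w := \varphi\Delta(x)\in\mathbb{C}$. The idea is to compare $\Delta(x)$ with the images of the two points obtained by forcing the $n_0$-th coordinate of $x$ to equal $+\lambda$ and $-\lambda$, respectively.

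First I would introduce $y^{+} := x + (\lambda-\alpha)e_{n_0}$ and $y^{-} := x - (\lambda+\alpha)e_{n_0}$, so that $y^{+}$ and $y^{-}$ agree with $x$ off $n_0$ and satisfy $y^{+}(n_0)=\lambda$, $y^{-}(n_0)=-\lambda$. Since the remaining coordinates have modulus at most $\|x\|=1$ and $|\pm\lambda|=1$, both $y^{\pm}$ lie in $S(\mathcal{L}^{\infty}(\Gamma))$ (and remain in $c_0(\Gamma)$ whenever $x$ does, as only one coordinate is altered); moreover $y^{+}\in A(n_0,\lambda)$ and $y^{-}\in A(n_0,-\lambda)$. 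By the very definition of supp$(n_0,\lambda)$ we get $\varphi\Delta(y^{+})=1$, and Lemma \ref{l face -1 face ellinfty} gives $\varphi\Delta(y^{-})=-1$.

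Next, using that $\Delta$ is a (sphere) isometry and that $\varphi$ is a norm-one complex-linear functional, I would estimate
\begin{equation*}
|w-1| = |\varphi(\Delta(x)-\Delta(y^{+}))| \le \|\Delta(x)-\Delta(y^{+})\| = \|x-y^{+}\| = |\alpha-\lambda|,
\end{equation*}
and, symmetrically, $|w+1| \le \|x-y^{-}\| = |\alpha+\lambda|$. Squaring and adding these two inequalities and invoking the parallelogram identity in $\mathbb{C}$ (twice) gives
\begin{equation*}
2|w|^{2}+2 = |w-1|^{2}+|w+1|^{2} \le |\alpha-\lambda|^{2}+|\alpha+\lambda|^{2} = 2|\alpha|^{2}+2,
\end{equation*}
whence $|w|\le|\alpha| = |x(n_0)|$, as desired.

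The construction is short, so there is no serious analytic obstacle; the only genuine decision is the symmetric choice of the two comparison points at $+\lambda$ and $-\lambda$. What makes the argument close is the geometric fact that the two closed unit disks centered at $1$ and $-1$ meet only at the origin (whose general, quantitative form is precisely the parallelogram computation above); this is exactly what converts the pair of distance bounds into the conclusion. I would double-check only the boundary bookkeeping, namely that replacing a single coordinate keeps $y^{\pm}$ on the sphere and inside $c_0(\Gamma)$, and that $\varphi$ being norm one legitimizes the first displayed inequality.
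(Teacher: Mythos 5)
Your proof is correct, and on the second assertion it takes a genuinely different route from the paper's. The paper argues in two separate steps: for $x(n_0)=0$ it compares $\Delta(x)$ with $\Delta(\pm\lambda e_{n_0})$ and uses that the two closed unit disks centered at $\pm 1$ meet only at the origin; for $|x(n_0)|<1$ it does not attempt a direct estimate but instead truncates, multiplying $x$ by the characteristic function $h$ of the set $\{n\in\Gamma : |x(n)|\geq 1-\varepsilon\}$ so that $(xh)(n_0)=0$ falls under the first case, and then bounds $|\varphi\Delta(x)| = |\varphi\Delta(x)-\varphi\Delta(xh)| \leq \|x-xh\| <1$. Your argument replaces both steps by the single quantitative inequality $|\varphi\Delta(x)|\leq |x(n_0)|$: by altering only the $n_0$-th coordinate of $x$ itself (rather than of the zero function, which is in effect what the paper does), your comparison points $y^{\pm}$ exist for \emph{every} $x$, not just those vanishing at $n_0$, and the parallelogram identity converts the two distance estimates $|w\mp 1|\leq|\alpha\mp\lambda|$ into the conclusion; the paper's first step is exactly the degenerate case $\alpha=0$ of this computation. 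What you gain is a sharper and uniform bound (the truncation argument only gives $|\varphi\Delta(x)|\leq\|x-xh\|\leq 1-\varepsilon$, which depends on the size of the other coordinates of $x$, not on $|x(n_0)|$), a single argument covering both assertions, and no need for the auxiliary element $h\in \ell_{\infty}(\Gamma)$. Your bookkeeping all checks out: $y^{\pm}$ remain in the sphere and in $c_0(\Gamma)$ since only one coordinate is changed; $y^{+}\in A(n_0,\lambda)$ gives $\varphi\Delta(y^{+})=1$ directly from the definition of $\hbox{supp}(n_0,\lambda)$; $y^{-}\in A(n_0,-\lambda)$ gives $\varphi\Delta(y^{-})=-1$ by Lemma \ref{l face -1 face ellinfty}; and $\|\varphi\|=1$ together with the isometry property of $\Delta$ justifies both displayed estimates.
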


\begin{proof} Let us take $x\in S(\mathcal{L}^{\infty} (\Gamma))$ such that $x(n_0)=0$. The functions $x\pm \lambda e_{n_0}$ lie in $S(\mathcal{L}^{\infty} (\Gamma))$ with  $\lambda e_{n_0}\in A(n_0,\lambda)$ and $- \lambda e_{n_0}\in A(n_0,-\lambda)$. Let us fix $\varphi\in \hbox{supp}(n_0,\lambda)$. Lemma \ref{l face -1 face ellinfty} implies that $\varphi \Delta(-\lambda e_{n_0}) =-1$, and clearly $\varphi \Delta(\lambda e_{n_0}) =1$. Thus $$|\varphi \Delta(x) \pm 1|  = |\varphi \Delta(x) \pm \varphi \Delta(\lambda e_{n_0})| =  |\varphi \Delta(x) - \varphi \Delta(\mp \lambda e_{n_0})|$$ $$\leq \|\varphi\| \ \| \Delta(x) - \Delta(\mp \lambda e_{n_0}) \| = \| x  \pm \lambda e_{n_0} \| =1,$$ which assures that $\varphi \Delta(x)=0.$\smallskip

For the last statement, let us take $x\in S(\mathcal{L}^{\infty} (\Gamma))$ with $|x(n_0)|<1.$  Let us find $1>\varepsilon >0$ such that $|x(n_0)|< 1-\varepsilon$. We consider the non-empty set $C_{\varepsilon} :=\{ n\in \Gamma : |x(n)|\geq 1-\varepsilon \}$. Let $h\in S(\ell_{\infty} (\Gamma))$ be the characteristic function of the set ${C_{\varepsilon}}$. It is easy to check that $x h \in S(\mathcal{L}^{\infty} (\Gamma))$, $(x h) (n_0)=0$, and $\|x-x h\| < 1-\varepsilon <1$.\smallskip

Since $(x h) (n_0)=0,$ the first statement in this proposition proves that $\varphi \Delta (x h) =0$, and thus $$| \varphi \Delta (x)  | = | \varphi \Delta (x) -\varphi \Delta (x h) | \leq \| \Delta (x) - \Delta (x h )\| = \| x - x h\| < 1-\varepsilon <1.$$
\end{proof}

Let us discuss a consequence of the previous proposition. We fix $n_0\in \Gamma$ and $\lambda\in \mathbb{T}.$ For each $y\in \hbox{Pick}(n_0,\lambda)$ we know that $|\varphi \Delta (y)|< 1$ for all $\varphi \in  \hbox{supp}(n_1,\mu)$, with $n_1\in \Gamma\backslash\{n_0\}$ and $\mu\in \mathbb{T}$.

\section{The Mazur-Ulam property for $c_0 (\Gamma)$}

The next lemma is a particular case of \cite[Lemma 1]{Ding07} and \cite[Lemma 2.4]{Ta:8}. A proof is included here for completeness reasons.

\begin{proposition}\label{p antipodes ellinfty} Let $\Delta : S(\mathcal{L}^{\infty} (\Gamma))\to S(X)$ be a surjective isometry.
Then, for each $n\in \Gamma$ and each $\mu\in \mathbb{T}$ we have $\Delta (- \mu e_n) = -\Delta (\mu e_n)$.
\end{proposition}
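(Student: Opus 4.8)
The plan is to identify $\mu e_n$ and $-\mu e_n$ as the unique Chebyshev centers of the maximal convex faces $A(n,\mu)$ and $A(n,-\mu)$, and to transport this purely metric description through $\Delta$. Since support functionals only see coordinates of modulus one, any attempt to pin down $\Delta(\pm\mu e_n)$ coordinatewise is doomed; the circumradius, being an isometric invariant of each face, sidesteps this entirely.

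First I would compute, for $\eta\in\{\mu,-\mu\}$ and $p\in A(n,\eta)$, the circumradius of $p$ inside its own face. The bound $|p(k)-q(k)|\le |p(k)|+1$ gives $\sup\{\|p-q\| : q\in A(n,\eta)\}\le 1+\sup_{k\neq n}|p(k)|$, while choosing $q=\eta e_n-\tfrac{p(k_0)}{|p(k_0)|}e_{k_0}\in A(n,\eta)$ for coordinates $k_0$ with $|p(k_0)|$ near the supremum forces the reverse inequality, so
\[
\sup\{\|p-q\| : q\in A(n,\eta)\}=1+\sup_{k\neq n}|p(k)|.
\]
This is minimized precisely when $p=\eta e_n$, with minimal value $1$; hence $\eta e_n$ is the \emph{unique} Chebyshev center of $A(n,\eta)$, of radius $1$. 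The computation is valid verbatim for $c_0(\Gamma)$, for $\ell_\infty(\Gamma)$, and for $\ell_\infty^m$.

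Next I would use that $A(n,\pm\mu)$ are maximal convex subsets of $S(\mathcal{L}^{\infty}(\Gamma))$, so, as recalled in the proof of Lemma~\ref{l existence of support functionals for the image of a face ellinfty}, both $F:=\Delta(A(n,-\mu))$ and $\Delta(A(n,\mu))$ are maximal convex subsets of $S(X)$, and $\Delta$ restricts to a surjective isometry of each face onto its image. A surjective isometry carries the unique Chebyshev center to the unique Chebyshev center, so $\Delta(-\mu e_n)$ equals the center $c$ of $F$, while $\Delta(\mu e_n)$ equals the center of $\Delta(A(n,\mu))$. The pivotal step is the face-swap $\Delta(A(n,\mu))=-F$: I would observe that $A(n,-\mu)$ is exactly the set of points of $S(\mathcal{L}^{\infty}(\Gamma))$ at distance $2$ from $\mu e_n$, whence $F$ is the set of points at distance $2$ from $u:=\Delta(\mu e_n)$; since $\|u-(-u)\|=2$ this gives $-u\in F$, i.e. $u\in -F$. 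Because $\mu e_n\in\hbox{Pick}(n,\mu)$ lies in a unique maximal convex subset, so does $u$, and as $-F$ is a maximal convex subset containing $u$, necessarily $\Delta(A(n,\mu))=-F$.

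Finally, the Chebyshev center of $-F$ is $-c$, since negation is an isometry taking the center $c$ of $F$ to the center of $-F$; therefore $\Delta(\mu e_n)=-c$, which combined with $\Delta(-\mu e_n)=c$ yields $\Delta(-\mu e_n)=c=-u=-\Delta(\mu e_n)$. I expect the main obstacle to be precisely the face-swap identity $\Delta(A(n,\mu))=-F$: the circumradius computation is elementary and the invariance of Chebyshev centers is formal, whereas the face-swap is what genuinely ties $\Delta(-\mu e_n)$ to $-\Delta(\mu e_n)$ rather than merely placing both in the same face, and it depends essentially on $\mu e_n$ lying in a single maximal convex subset.
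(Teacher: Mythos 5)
Your proposal is correct, but it takes a genuinely different route from the paper's. The paper's proof is elementary and self-contained: it chooses $x$ with $\Delta(x)=-\Delta(\mu e_n)$, gets $x(n)=-\mu$ from $\|x-\mu e_n\|=2$, and then, for each $m\neq n$, compares $x$ with the preimages of $-\Delta(\mu e_m)$ and of $-\Delta(-\mu e_m)$; the two resulting inequalities $|x(m)+\mu|\leq 1$ and $|x(m)-\mu|\leq 1$ force $x(m)=0$, so $x=-\mu e_n$. No facial machinery enters at all. You instead exploit an isometric invariant of the faces: the computation $\sup\{\|p-q\|:q\in A(n,\eta)\}=1+\sup_{k\neq n}|p(k)|$ exhibits $\eta e_n$ as the unique intrinsic Chebyshev center of $A(n,\eta)$, surjective isometries between metric spaces preserve unique Chebyshev centers, and the face-swap $\Delta(A(n,\mu))=-\Delta(A(n,-\mu))$ (obtained from the distance-$2$ description of $A(n,-\mu)$ together with uniqueness of the maximal convex subset through $\Delta(\mu e_n)$) closes the argument. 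Your route uses heavier tools, namely the Cheng--Dong/Tanaka theorem behind Lemma \ref{l existence of support functionals for the image of a face ellinfty} applied to both $\Delta$ and $\Delta^{-1}$, but it also proves more: the global identity $\Delta(A(n,\mu))=-\Delta(A(n,-\mu))$, of which the proposition is the evaluation at the centers. The paper's route needs none of this and, as Remark \ref{remark antipodes finite dimensional} points out, is really a special case of known lemmas of Ding and Tan.

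Two justifications in your write-up should be tightened, though neither is a genuine gap. First, membership in $\hbox{Pick}(n,\mu)$ by itself does not guarantee lying in a unique maximal convex subset when $\mathcal{L}^{\infty}(\Gamma)=\ell_{\infty}(\Gamma)$: an element $y$ with $|y(k)|<1$ for all $k\neq n$ but $\sup_{k\neq n}|y(k)|=1$ satisfies $\hbox{St}(y)\supsetneq A(n,\mu)$, hence lies in further maximal faces. What you actually need is $\hbox{St}(\mu e_n)=A(n,\mu)$, which holds precisely because the off-$n$ coordinates of $\mu e_n$ vanish --- the same reason the paper, in Lemma \ref{l face -1 face ellinfty}, insists on picking its witness in $\hbox{Pick}(n,\lambda)\cap c_0(\Gamma)$. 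Second, transferring ``lies in a unique maximal convex subset'' from $\mu e_n$ to $u=\Delta(\mu e_n)$ requires applying the maximal-convexity lemma to $\Delta^{-1}$ as well, so that any maximal convex subset through $u$ pulls back to one through $\mu e_n$ and must therefore coincide with $\Delta(A(n,\mu))$; this is legitimate but should be stated.
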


\begin{proof} Let us find $x\in S(\mathcal{L}^{\infty} (\Gamma))$ satisfying $\Delta (x) = -\Delta (\mu e_n)$. By hypothesis
$$2=\|-2 \Delta (\mu e_n)\|=\|\Delta(x) -\Delta(\mu e_n) \|= \|x- \mu e_n\|,$$ which shows that $x(n) = -\mu$.\smallskip

Now, fix $m\neq n$ and take another $y\in S(\mathcal{L}^{\infty} (\Gamma))$ satisfying $\Delta (y) = -\Delta (\mu e_m)$. The above arguments also show that $y(m) = -\mu$. On the other hand, $$\|x-y\|=\|\Delta(x) -\Delta(y) \|=\|-\Delta (\mu e_n)+\Delta (\mu e_m)\|=\|\mu e_m- \mu e_n\|=1,$$ and hence $|x(m)+\mu|\leq 1$ and $|y(n)+\mu|\leq 1$.

Finally pick $z\in S(\mathcal{L}^{\infty} (\Gamma))$ satisfying $\Delta (z) = -\Delta (-\mu e_m)$. Under these assumption we know that $$\|z +\mu e_m \| = \|z-(-\mu e_m)\| = \|\Delta (z) - \Delta (-\mu e_m)\|= 2 \|- \Delta (-\mu e_m)\|=2,$$ witnessing that $z(m)=\mu$.\smallskip

Since $$\|x-z\|=\|\Delta(x) -\Delta(z) \|=\|-\Delta (\mu e_n)+\Delta (-\mu e_m)\|=\|-\mu e_m- \mu e_n\|=1,$$ and thus $|x(m)-\mu|\leq 1$ and $|-z(n)-\mu|\leq 1$.\smallskip

The inequalities $|x(m)+\mu|\leq 1$ and $|x(m)-\mu|\leq 1$ imply $x(m)=0$. Therefore, $x(m) =0$ for every $m\neq n$ and consequently $x= -\mu e_n$, which concludes the proof.
\end{proof}

\begin{remark}\label{remark antipodes finite dimensional}{\rm If in the previous proposition $\mathcal{L}^{\infty} (\Gamma)$ is replaced with $\ell_\infty^m$ then, the same conclusion remains true by the original Tingley's theorem \cite{Ting1987}, which shows that for finite dimensional normed spaces $X$ and $Y$, every surjective isometry $\Delta: S(X)\to S(Y)$ satisfies $\Delta(-x) = -\Delta(x)$ for every $x\in S(X)$.}
\end{remark}

The next proposition establishes the behavior of a surjective isometry on a spherical multiple of some element of the canonical basis.

\begin{proposition}\label{p imaginary ellinfty} Let $\Delta : S(\mathcal{L}^{\infty} (\Gamma))\to S(X)$ be a surjective isometry.
Then, for each $n\in \Gamma$ and each $\lambda\in \mathbb{T}$ we have $\Delta (\lambda e_n) \in \{\lambda \Delta (e_n), \overline{\lambda} \Delta (e_n)\}$.\smallskip

Furthermore, if for some $n\in \Gamma$ we have $\Delta (\lambda e_n) = \lambda \Delta (e_n)$ {\rm(}respectively, $\Delta (\lambda e_n) = \overline{\lambda} \Delta (e_n)${\rm)} for some $\lambda \in \mathbb{T}\backslash \{\pm 1\}$, then $\Delta (\mu e_n) = \mu \Delta (e_n)$ {\rm(}respectively, $\Delta (\mu e_n) = \overline{\mu} \Delta (e_n)${\rm)} for all $\mu \in \mathbb{T}$.
\end{proposition}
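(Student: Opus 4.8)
The plan is to fix $n\in\Gamma$, write $u:=\Delta(e_n)$, and analyse the whole circle $\{\Delta(\mu e_n):\mu\in\mathbb{T}\}$ simultaneously: I will first extract its exact metric behaviour, then read off the real parts of all the relevant support functionals on it, deduce the dichotomy $\Delta(\lambda e_n)\in\{\lambda u,\overline{\lambda}u\}$ from a rigidity (planarity) statement, and finally obtain the propagation assertion by a short equidistance argument. To begin, by Proposition \ref{p antipodes ellinfty} we have $\Delta(-\mu e_n)=-\Delta(\mu e_n)$, so for all $\mu,\mu'\in\mathbb{T}$ both $\|\Delta(\mu e_n)-\Delta(\mu' e_n)\|=|\mu-\mu'|$ and $\|\Delta(\mu e_n)+\Delta(\mu' e_n)\|=\|\Delta(\mu e_n)-\Delta(-\mu' e_n)\|=|\mu+\mu'|$ hold; in particular every pair $\{\Delta(\mu e_n),\Delta(\mu' e_n)\}$ satisfies the parallelogram identity.

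The key computational step is to evaluate the supports on this circle. For each $\nu\in\mathbb{T}$ fix $\varphi_\nu\in\hbox{supp}(n,\nu)$, which is non-empty by Lemma \ref{l existence of support functionals for the image of a face ellinfty}. Since $\nu e_n\in A(n,\nu)$ and $-\nu e_n\in A(n,-\nu)$, the definition of the support together with Lemma \ref{l face -1 face ellinfty} give $\varphi_\nu\Delta(\nu e_n)=1$ and $\varphi_\nu\Delta(-\nu e_n)=-1$. Applying $\varphi_\nu$ to the two norm identities above (taking $\mu'=\nu$ and $\mu'=-\nu$) yields $|\varphi_\nu\Delta(\mu e_n)-1|\le|\mu-\nu|$ and $|\varphi_\nu\Delta(\mu e_n)+1|\le|\mu+\nu|$. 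Writing $\varphi_\nu\Delta(\mu e_n)=a+bi$ and subtracting the two squared inequalities (using $|\mu\pm\nu|^2=2\pm2\operatorname{Re}(\overline{\nu}\mu)$) gives $a\le\operatorname{Re}(\overline{\nu}\mu)$; replacing $\mu$ by $-\mu$ and invoking $\varphi_\nu\Delta(-\mu e_n)=-\varphi_\nu\Delta(\mu e_n)$ gives the reverse inequality. Hence $\operatorname{Re}(\varphi_\nu\Delta(\mu e_n))=\operatorname{Re}(\overline{\nu}\mu)$ for all $\mu,\nu\in\mathbb{T}$, and feeding this equality back into either inequality also forces $|\operatorname{Im}(\varphi_\nu\Delta(\mu e_n))|\le|\operatorname{Im}(\overline{\nu}\mu)|$.

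Specialising to $\nu=1$ and $\nu=i$, the bounded real-linear functionals $f_1:=\operatorname{Re}\circ\varphi_1$ and $f_2:=\operatorname{Re}\circ\varphi_i$ satisfy $f_1(\Delta(e^{i\theta}e_n))=\cos\theta$ and $f_2(\Delta(e^{i\theta}e_n))=\sin\theta$, so under $(f_1,f_2)$ the image circle projects exactly onto the standard Euclidean circle, with $u\mapsto(1,0)$ and $w:=\Delta(ie_n)\mapsto(0,1)$. The main obstacle is to upgrade this to a genuine planarity statement: one must show that the curve $\theta\mapsto\Delta(e^{i\theta}e_n)$ lies in $\operatorname{span}_{\mathbb{R}}\{u,w\}$ and, sharper still, that this plane is the complex line $\mathbb{C}u$, i.e. $w\in\{iu,-iu\}$. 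This is precisely where an isometric copy of a circle in a general Banach space need not be flat, so the argument cannot rely on the restriction to $\mathbb{T}e_n$ alone but must exploit the global face structure of $S(\mathcal{L}^{\infty}(\Gamma))$; I would combine Proposition \ref{p kernel of the support ellinfty} (which annihilates $\Delta(\mu e_n)$ against every $\varphi\in\hbox{supp}(n_1,\cdot)$ with $n_1\neq n$) with the equality cases in the inequalities above to squeeze each $\Delta(\mu e_n)$ onto $\{\mu u,\overline{\mu}u\}$, first pinning $w=\pm iu$ and then reading $\Delta(e^{i\theta}e_n)=\cos\theta\,u+\sin\theta\,w=\mu^{\pm1}u$ off the coordinates $(f_1,f_2)$. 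This planarity/scalar-multiple step is where the real work lies; the rest is bookkeeping.

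Granting the dichotomy, the second (propagation) statement follows cleanly and with no further use of supports. Suppose $\Delta(\lambda e_n)=\lambda u$ for some $\lambda\in\mathbb{T}\setminus\{\pm1\}$ and let $\mu\in\mathbb{T}$; by the first part $\Delta(\mu e_n)\in\{\mu u,\overline{\mu}u\}$. If we had $\Delta(\mu e_n)=\overline{\mu}u$ with $\mu\neq\pm1$, then $|\lambda-\mu|=\|\Delta(\lambda e_n)-\Delta(\mu e_n)\|=\|\lambda u-\overline{\mu}u\|=|\lambda-\overline{\mu}|$, so $\lambda$ would be equidistant from $\mu$ and $\overline{\mu}$, forcing $\lambda\in\mathbb{R}$ and hence $\lambda\in\{\pm1\}$, a contradiction; thus $\Delta(\mu e_n)=\mu u$ for every $\mu$ (the cases $\mu=\pm1$ being trivial since then $\mu u=\overline{\mu}u$). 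The conjugate case is symmetric: if $\Delta(\lambda e_n)=\overline{\lambda}u$ with $\lambda\neq\pm1$ and $\Delta(\mu e_n)=\mu u$ for some $\mu\neq\pm1$, the same computation gives $|\lambda-\mu|=|\overline{\lambda}-\mu|$, whence $\mu\in\mathbb{R}$, a contradiction, so $\Delta(\mu e_n)=\overline{\mu}u$ for all $\mu$.
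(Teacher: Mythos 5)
Your first (dichotomy) part contains two genuine gaps, and the second of them is fatal to the approach as outlined. First, the ``key computational step'' is invalid: from the two bounds $|\varphi_\nu\Delta(\mu e_n)-1|\le|\mu-\nu|$ and $|\varphi_\nu\Delta(\mu e_n)+1|\le|\mu+\nu|$ you cannot ``subtract the squared inequalities'' --- both point the same way, and $P\le Q$, $R\le S$ never yields $P-R\le Q-S$. Writing $\varphi_\nu\Delta(\mu e_n)=a+bi$, what the two bounds actually give is $|a-\mathrm{Re}(\overline{\nu}\mu)|\le \frac{1}{2}\left(1-a^2-b^2\right)$, which collapses to your identity $a=\mathrm{Re}(\overline{\nu}\mu)$ only when $|\varphi_\nu\Delta(\mu e_n)|=1$; for instance $a=b=0$ is compatible with both bounds whenever $|\mathrm{Re}(\overline{\nu}\mu)|\le 1/2$, so the claimed projection onto the Euclidean circle is not established. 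Second, and more fundamentally, you concede yourself that the planarity step (that the image circle lies in $\mathbb{C}\Delta(e_n)$, i.e.\ $\Delta(ie_n)=\pm i\Delta(e_n)$) is ``where the real work lies'' and only gesture at combining Proposition \ref{p kernel of the support ellinfty} with ``equality cases''. That step cannot be completed from the data you collect: the functionals in the sets $\hbox{supp}(m,\nu)$ are merely some norm-one functionals on the abstract space $X$, and nothing guarantees they separate points of $X$; hence knowing their values (or real parts) on $\Delta(\mu e_n)$ does not pin down $\Delta(\mu e_n)$ as a vector, and an isometric copy of a circle in a general Banach space need not be flat.

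The paper's proof avoids exactly this obstruction by working backwards, and this is the idea your outline is missing: since $\lambda\Delta(e_n)\in S(X)$ and $\Delta$ is surjective, there is $x\in S(\mathcal{L}^{\infty}(\Gamma))$ with $\Delta(x)=\lambda\Delta(e_n)$, and one then exploits the coordinates of the \emph{sequence space}, where separation of points is available for free. Lemma \ref{l existence of support functionals for the image of a face ellinfty} and Proposition \ref{p kernel of the support ellinfty} force $x(k)=0$ for every $k\neq n$ (treating $|x(k)|=1$ and $0<|x(k)|<1$ separately, the latter by comparing $x$ with the perturbation $y$ having $y(k)=x(k)/|x(k)|$), so $x=\mu e_n$ for some $\mu\in\mathbb{T}$; then Proposition \ref{p antipodes ellinfty} gives $|\lambda\pm 1|=\|\Delta(\mu e_n)\pm\Delta(e_n)\|=|\mu\pm 1|$, whence $\mu\in\{\lambda,\overline{\lambda}\}$. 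By contrast, your second (propagation) argument is correct conditional on the dichotomy, and is in fact slightly cleaner than the paper's: by using differences $\|\Delta(\lambda e_n)-\Delta(\mu e_n)\|=|\lambda-\mu|$ you need only the isometry property and complex homogeneity of the norm, whereas the paper works with sums and must invoke Proposition \ref{p antipodes ellinfty} a second time.
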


\begin{proof} The element $\lambda \Delta(e_n)$ lies in $S(X)$, so by the surjectivity of $\Delta$ there exists $x\in S(\mathcal{L}^{\infty} (\Gamma))$ such that $\Delta(x) = \lambda \Delta(e_n)$. We shall first prove that $x(k)=0$ for all $k\neq n$.\smallskip

Suppose that $|x(k)|=1$ for some $k\neq n$. Then, by Lemma \ref{l existence of support functionals for the image of a face ellinfty}, we get $\varphi \Delta(x) = 1$ for all $\varphi\in \hbox{supp}(k,x(k)).$ However $1=\varphi\Delta(x) = \varphi (\lambda \Delta(e_n)) = \lambda \varphi \Delta (e_n),$ and since $e_n (k)=0$, Proposition \ref{p kernel of the support ellinfty} gives $\varphi \Delta (e_n) =0$, which is impossible. We have therefore shown that $|x(k)|<1$ for $k\neq n$.\smallskip

If $0<|x(k)|<1,$ let $y \in S(\mathcal{L}^{\infty} (\Gamma))$ be the element defined by $y(m) = x(m)$ for all $m\neq k$ and $y(k) = x(k)/{|x(k)|} $. Clearly $$\|x-y\|= \left| x(k) - \frac{1}{|x(k)|} x(k)\right| = \left|\frac{|x(k)|-1}{|x(k)|}\right| |x(k)| = 1- |x(k)|,$$ and for each $\varphi \in \hbox{supp}(k,{x(k)}/{|x(k)|}),$ Lemma \ref{l existence of support functionals for the image of a face ellinfty} assures that $\varphi \Delta (y)=1$. Therefore $$1- |\varphi \Delta (x)| = \varphi \Delta (y) - |\varphi \Delta (x)| \leq |\varphi \Delta (y) - \varphi \Delta (x)|\leq \| y-x\| = 1- |x(k)|,$$ which shows that   $$ |x(k)| \leq |\varphi \Delta (x)| = |\varphi (\lambda \Delta (e_n))| = |\varphi  \Delta (e_n)| = \hbox{(by Proposition \ref{p kernel of the support ellinfty})}= 0,$$ leading to a contradiction. This shows that $\lambda \Delta (e_n)=\Delta (\mu e_n)$ for some $\mu\in \mathbb{T}$.\smallskip

By applying Proposition \ref{p antipodes ellinfty} we get $$|\lambda \pm 1|=\| \lambda \Delta(e_n) \pm \Delta(e_n)\|=\| \Delta(x) \pm \Delta(e_n)\| $$ $$=\| \Delta(\mu e_n) \pm \Delta(e_n)\|= \| \mu e_n \pm e_n\| =|\mu \pm 1|,$$ and then $\mu \in \{\lambda, \overline{\lambda}\}.$\smallskip

Let us finally prove the last statement. Let us assume that $\Delta (\lambda e_n) = \lambda \Delta (e_n)$ {\rm(}respectively, $\Delta (\lambda e_n) = \overline{\lambda} \Delta (e_n)$\textrm{)} for some $\lambda \in \mathbb{T}\backslash \{\pm 1\}$. Let $\mu$ be an arbitrary element in $\mathbb{T}\backslash \mathbb{R}$. We have shown above that $\Delta (\mu e_n) = \mu \Delta (e_n)$ or $\Delta (\mu e_n) = \overline{\mu} \Delta (e_n)$. We shall prove that the second possibility (respectively, the first one) is impossible. Arguing by contradiction, we suppose that $\Delta (\mu e_n) = \overline{\mu} \Delta (e_n)$ (respectively, $\Delta (\mu e_n) = {\mu} \Delta (e_n)$). By the assumptions and Proposition \ref{p antipodes ellinfty} we have $$|\lambda + \mu | = \| \lambda \Delta (e_n) + \mu \Delta (e_n) \| = \|  \Delta (\lambda e_n) + \Delta (\overline{\mu} e_n) \|$$ $$= \|  \Delta (\lambda e_n) - \Delta (-\overline{\mu} e_n) \| = \|  \lambda e_n + \overline{\mu} e_n \| = |\lambda + \overline{\mu} |$$ (respectively, $$|\lambda + \mu | = \| \lambda \Delta (e_n) + \mu \Delta (e_n) \| = \|  \Delta (\overline{\lambda} e_n) + \Delta ({\mu} e_n) \| = \|  \overline{\lambda} e_n + {\mu} e_n \| = |\overline{\lambda} + {\mu} |).$$ Any of the previous identities holds if and only if $$2 + 2 \Re\hbox{e} (\lambda \overline{\mu})=|\lambda|^2 +| \mu |^2 + 2 \Re\hbox{e} (\lambda \overline{\mu}) = |\lambda + \mu |^2 = |\lambda + \overline{\mu} |^2 = 2 + 2 \Re\hbox{e} (\lambda {\mu}),$$ equivalently $$\Re\hbox{e} (\lambda) \Re\hbox{e} ({\mu}) + \Im\hbox{m} (\lambda) \Im\hbox{m} (\mu)= \Re\hbox{e} (\lambda) \Re\hbox{e} ({\mu}) - \Im\hbox{m} (\lambda) \Im\hbox{m} (\mu),$$ which is impossible because $\lambda,\mu\notin \mathbb{R}$.
\end{proof}

Let $\Delta : S(\mathcal{L}^{\infty} (\Gamma))\to S(X)$ be a surjective isometry. Henceforth, we set  $$\Gamma_1^{\Delta} :=\{n\in \Gamma : \Delta (\lambda e_n) = \lambda \Delta (e_n)\hbox{ for all }\lambda \in \mathbb{T}\},$$ and $$\Gamma_2^{\Delta} :=\{n\in \Gamma : \Delta (\lambda e_n) = \overline{\lambda} \Delta (e_n)\hbox{ for all }\lambda \in \mathbb{T}\}.$$ It follows from Proposition \ref{p imaginary ellinfty} that $\Gamma = \Gamma_1^{\Delta} \stackrel{\circ}{\cup} \Gamma_2^{\Delta}$. Given $n\in \Gamma_1^{\Delta}$ (respectively, $n\in \Gamma_2^{\Delta}$) and $\alpha\in \mathbb{C}$ we define $\sigma_{n} (\alpha) = \alpha$ (respectively, $\sigma_{n} (\alpha) = \overline{\alpha}$). We know from Proposition \ref{p imaginary ellinfty} that \begin{equation}\label{eq Delta takes out complex or conjugate scalar at a coordinate} \Delta (\lambda e_n) = \sigma_{n} (\lambda) \ \Delta(e_n),\ \forall \lambda\in \mathbb{T}, n\in \Gamma.
\end{equation} We also observe that $\sigma_{n} (\alpha) = \alpha$ for all $\alpha\in \mathbb{R}$, $n\in \Gamma$.\smallskip

Let $x$ and $y$ be two vectors in a (real or complex) normed space $X$. The elements $x,y$ are said to be \emph{$M$-orthogonal} (denoted by
$x\perp_{M} y$) if $\| x \pm y\| = \max \{\|x\| , \|y\|\}$. In the setting of complex Banach spaces we can find more variants of geometric orthogonality. Accordingly to the notation in \cite{OikPeRa11}, we shall say
that $x$ and $y$ are {\it completely $M$-orthogonal} (denoted by $x \perp_{_{CM}} y$) if $$\|\alpha x +\beta y\| = \max\{|\alpha| \ \|x\|, |\beta|\ \|y\|\},$$ for every $\alpha,\beta$ in $\mathbb{C}$.\smallskip

The canonical notion of (algebraic) orthogonality in $\ell_{\infty}(\Gamma),$ $c_{0}(\Gamma)$ and $c(\Gamma)$ reads as follows: elements $a$, $b$ in any of these spaces are said to be \emph{orthogonal} or \emph{disjoint} if $a b=0$. Algebraic orthogonality is stronger than complete $M$-orthogonality and the latter is stronger than $M$-orthogonality. For example, $x=(1,1/2,0)$ and $y=(0,1/4,1)$ are $M$-orthogonal and completely $M$-orthogonal with $x y \neq 0$ in $\ell_{\infty}^3$, the three dimensional $\ell_{\infty}$-space. While $x=(1,1/2,0)$ and $y=(0,{\sqrt{3} i }/{2},1)$ are $M$-orthogonal but not completely $M$-orthogonal.\smallskip

Finally, we shall say that a set $\{x_1,\ldots,x_m\}$ in $X$ is {\it completely $M$-orthogonal} if $$\Big\|\sum_{j=1}^m \alpha_j x_j \Big\| = \max\{|\alpha_j| \ \|x_j\| : 1\leq j\leq m\},$$ for every $\alpha_1,\ldots,\alpha_m$ in $\mathbb{C}$.\smallskip

The following technical lemma will be required latter.

\begin{lemma}\label{l characterization CMorthog in sphere} Let $\{x_1,\ldots,x_m\}$ be a subset of the unit sphere of a complex normed space $X$. Then $\{x_1,\ldots,x_m\}$ is completely $M$-orthogonal if and only if the equality $$\displaystyle \Big\|\sum_{j=1}^m \alpha_j x_j \Big\| = 1$$ holds for every $\alpha_1,\ldots,\alpha_m$ in $\mathbb{T}$.
\end{lemma}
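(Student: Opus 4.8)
The forward implication is immediate: if $\{x_1,\ldots,x_m\}$ is completely $M$-orthogonal, then for any $\alpha_1,\ldots,\alpha_m \in \mathbb{T}$ we have $\big\|\sum_{j=1}^m \alpha_j x_j\big\| = \max\{|\alpha_j|\,\|x_j\| : 1\leq j\leq m\} = \max\{1 : 1\leq j\leq m\} = 1$, since each $|\alpha_j|=1$ and each $\|x_j\|=1$. So the entire content of the lemma lies in the converse, and the plan is to bootstrap from unimodular scalars to arbitrary complex scalars.

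\textbf{Strategy for the converse.} Assume $\big\|\sum_{j=1}^m \alpha_j x_j\big\| = 1$ for all $\alpha_j \in \mathbb{T}$, and fix arbitrary $\beta_1,\ldots,\beta_m \in \mathbb{C}$; I must show $\big\|\sum_j \beta_j x_j\big\| = \max_j |\beta_j|$. The first reduction is to the case where all $\beta_j \neq 0$ and $\max_j|\beta_j| = 1$ (the case where some $\beta_j$ vanishes is handled by relabeling/taking limits, and homogeneity reduces the general case to the normalized one). Writing $\beta_j = r_j \lambda_j$ with $r_j = |\beta_j| \in [0,1]$ and $\lambda_j \in \mathbb{T}$, the key idea is to realize $\sum_j \beta_j x_j$ as an average of the unit-sphere vectors $\sum_j \varepsilon_j \lambda_j x_j$ over appropriate unimodular $\varepsilon_j$. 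Concretely, I would express each scaling factor $r_j \in [0,1]$ as an expectation of a $\mathbb{T}$-valued (or $\{\pm 1\}$-valued) random sign so that, after integrating, the cross terms collapse and only the desired combination survives. This gives $\sum_j \beta_j x_j$ as a convex combination (an average) of vectors each of norm $1$, whence $\big\|\sum_j \beta_j x_j\big\| \leq 1 = \max_j|\beta_j|$.

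\textbf{The averaging identity.} The cleanest route is to write, for each $j$, the real number $r_j \in [0,1]$ as $r_j = \tfrac{1}{2}\big(e^{i\theta_j} + e^{-i\theta_j}\big) = \cos\theta_j$ with $\theta_j \in [0,\pi/2]$, and then note that choosing signs $\eta_j \in \{+1,-1\}$ independently and uniformly gives
\begin{equation*}
\sum_{j=1}^m r_j \lambda_j x_j \;=\; \mathbb{E}_{\eta}\Big[\sum_{j=1}^m e^{i\eta_j \theta_j}\lambda_j x_j\Big],
\end{equation*}
where the expectation runs over the $2^m$ sign patterns $\eta = (\eta_1,\ldots,\eta_m)$. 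Each vector inside the expectation has the form $\sum_j \mu_j x_j$ with $\mu_j = e^{i\eta_j\theta_j}\lambda_j \in \mathbb{T}$, so by hypothesis has norm exactly $1$; the triangle inequality applied to the average then yields $\big\|\sum_j r_j\lambda_j x_j\big\| \leq 1$. This establishes the upper bound $\big\|\sum_j \beta_j x_j\big\| \leq \max_j |\beta_j|$ in the normalized case.

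\textbf{The reverse inequality and the main obstacle.} For the lower bound $\big\|\sum_j \beta_j x_j\big\| \geq \max_j|\beta_j|$, I expect the main difficulty to lie, since the averaging trick only produces upper bounds. One approach is to pick the index $j_0$ attaining $\max_j|\beta_j| = \|x_{j_0}\|=1$, and to exploit a Hahn–Banach functional $\varphi \in S(X^*)$ with $\varphi(x_{j_0}) = 1$; the trouble is that $\varphi$ need not annihilate the other $x_j$, so $\varphi\big(\sum_j \beta_j x_j\big)$ is not obviously of modulus $\geq |\beta_{j_0}|$. To circumvent this, I would again average: replacing the fixed coefficients on the indices $j \neq j_0$ by independent uniform $\mathbb{T}$-valued rotations and integrating kills every term except the $j_0$-th, so that $\mathbb{E}_\tau\big[\varphi(\sum_j \tau_j \beta_j x_j)\big] = \beta_{j_0}\varphi(x_{j_0}) = \beta_{j_0}$ (with $\tau_{j_0}=1$); since each norm $\big\|\sum_j \tau_j\beta_j x_j\big\|$ equals the common value $\big\|\sum_j \beta_j x_j\big\|$ by the already-proved upper bound together with homogeneity under $\mathbb{T}$-rotations, the bound $|\beta_{j_0}| \leq \big\|\sum_j \beta_j x_j\big\|$ follows from $|\mathbb{E}| \leq \max |\cdot|$. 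Combining the two inequalities closes the argument.
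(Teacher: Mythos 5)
Your ``only if'' direction is fine, and so is your upper bound: the averaging identity $\sum_j r_j\lambda_j x_j = \mathbb{E}_\eta\big[\sum_j e^{i\eta_j\theta_j}\lambda_j x_j\big]$ (with $r_j=\cos\theta_j$, $\eta_j=\pm 1$) correctly exhibits any combination whose coefficients lie in the closed unit disc as an average of $2^m$ unimodular combinations, each of norm one by hypothesis, whence $\big\|\sum_j\beta_j x_j\big\|\leq\max_j|\beta_j|$ (and it even handles vanishing coefficients via $\theta_j=\pi/2$, so no relabeling or limits are needed). The genuine gap is in your lower bound, at the step asserting that ``each norm $\big\|\sum_j \tau_j\beta_j x_j\big\|$ equals the common value $\big\|\sum_j \beta_j x_j\big\|$ by the already-proved upper bound together with homogeneity under $\mathbb{T}$-rotations.'' There is no such homogeneity: rotating the coefficients \emph{individually} is not a norm-preserving operation in a general normed space, and your hypothesis gives equality of norms only when \emph{all} coefficients are unimodular. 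At this stage of the proof all you know is $\big\|\sum_j\tau_j\beta_j x_j\big\|\leq|\beta_{j_0}|$ for every $\tau$; the claim that the value at $\tau=(1,\ldots,1)$ attains the supremum over $\tau$ is essentially the conclusion being proved, so the step is circular. As written, your chain $|\beta_{j_0}|=\big|\mathbb{E}_\tau\big[\varphi\big(\sum_j\tau_j\beta_j x_j\big)\big]\big|\leq\max_\tau\big\|\sum_j\tau_j\beta_j x_j\big\|$ only shows that \emph{some} rotated combination has norm at least $|\beta_{j_0}|$, not the one you need.

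The gap is reparable, in two ways. (a) Keep your averaging but use $|\mathbb{E}(\cdot)|\leq\mathbb{E}|\cdot|$ instead of $|\mathbb{E}(\cdot)|\leq\max|\cdot|$: the continuous function $f(\tau):=\big\|\sum_j\tau_j\beta_j x_j\big\|$ satisfies $f\leq|\beta_{j_0}|$ pointwise (your upper bound) while $\mathbb{E}_\tau[f]\geq\big|\mathbb{E}_\tau\big[\varphi\big(\sum_j\tau_j\beta_j x_j\big)\big]\big|=|\beta_{j_0}|$, so $f$ is constantly $|\beta_{j_0}|$ and in particular $f(1,\ldots,1)=|\beta_{j_0}|$. (b) More directly: your upper bound (with zero coefficients allowed) gives $\|x_{j_0}+\alpha x_k\|\leq 1$ for all $\alpha\in\mathbb{T}$ and $k\neq j_0$, so any norm-one $\varphi$ with $\varphi(x_{j_0})=1$ satisfies $|1+\alpha\varphi(x_k)|\leq 1$ for all $\alpha\in\mathbb{T}$, forcing $\varphi(x_k)=0$; then $\big\|\sum_j\beta_j x_j\big\|\geq\big|\varphi\big(\sum_j\beta_j x_j\big)\big|=|\beta_{j_0}|$ at once. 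Repair (b) is, in different clothing, exactly what the paper does: it regards $X$ as a subspace of $C(\mathcal{B}_{X^*})$, observes that the hypothesis is equivalent to the pointwise bound $\sum_j|x_j(t)|\leq 1$ on $\mathcal{B}_{X^*}$, and evaluates at a point $t_0$ where $|x_{j_0}(t_0)|=1$, at which all the other $x_k$ necessarily vanish; that single pointwise inequality packages both your upper bound and the annihilation property your functional $\varphi$ was missing.
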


\begin{proof} The ``only if'' implication is clear. For the ``if'' implication, let us regard $X$ as a closed subspace of $C(\mathcal{B}_{X^*})$, the space of all complex-valued continuous functions on the closed unit ball of $X^*$. Under this identification, $x_1,\ldots,x_m$ are norm-one functions in $C(\mathcal{B}_{X^*})$ such that $\displaystyle \Big\|\sum_{j=1}^m \alpha_j x_j \Big\| = 1$ for every $\alpha_1,\ldots,\alpha_m$ in $\mathbb{T}$. It is easy to see that this is equivalent to say that $\displaystyle \sum_{j=1}^m |x_j(t)| \leq 1$ for every $t\in \mathcal{B}_{X^*}$. Therefore given $t_0$ in $\mathcal{B}_{X^*}$ and $j_0\in \{1,\ldots, m\}$ with $|x_{j_0} (t_0)|=1$ then $x_k(t_0)=0$ for all $k\neq j_0$.\smallskip

Take $\beta_1,\ldots,\beta_m$ in $\mathbb{C}\backslash \{0\}$, with $|\beta_{j_0} |=\max\{ |\beta_j| : 1\leq j \leq m\}$ for an index $j_0$. Since $$\Big\| \sum_{j=1}^m \beta_j x_j  \Big\|= |\beta_{j_0}| \ \Big\| x_{j_0} + \sum_{j\neq j_0}^m \frac{\beta_j}{\beta_{j_0}} x_j  \Big\|$$ $$ \leq |\beta_{j_0}| \ \left( \|x_{j_0}\| + \sum_{j\neq j_0}^m \left|\frac{\beta_j}{\beta_{j_0}}\right| \|x_j\|\right) \leq |\beta_{j_0}| \  \sum_{j=1}^m \|x_j\| \leq |\beta_{j_0}|,$$ we deduce that $\displaystyle \Big\| \sum_{j=1}^m \beta_j x_j  \Big\| \leq \max\{ |\beta_j| : 1\leq j \leq m\}$. Now, by taking $t_0\in \mathcal{B}_{X^*}$ with $|x_{j_0}(t_0)|=1$, we get $\displaystyle \Big\| \sum_{j=1}^m \beta_j x_j  \Big\| \geq \Big|\sum_{j=1}^m \beta_j x_j  (t_0) \Big|= |\beta_{j_0} | \ |x_{j_0} (t_0)| =|\beta_{j_0}|.$\end{proof}

\begin{corollary}\label{c orthogonal have CM orthog images} Let $\Delta : S(\mathcal{L}^{\infty}(\Gamma))\to S(X)$ be a surjective isometry.
Then, for each $n,m\in \Gamma$ with $n\neq m$ we have $\Delta ( e_n) \perp_{_{CM}} \Delta (e_m)$.
\end{corollary}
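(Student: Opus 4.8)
The goal is to show that for distinct $n,m\in\Gamma$, the images $\Delta(e_n)$ and $\Delta(e_m)$ are completely $M$-orthogonal. By Lemma \ref{l characterization CMorthog in sphere}, since $\Delta(e_n),\Delta(e_m)\in S(X)$, it suffices to verify that
$$\|\alpha \Delta(e_n) + \beta \Delta(e_m)\| = 1$$
for every $\alpha,\beta\in\mathbb{T}$. The plan is to reduce this norm computation to a norm computation back in $\mathcal{L}^{\infty}(\Gamma)$ via the isometry, using the scalar-extraction identity \eqref{eq Delta takes out complex or conjugate scalar at a coordinate} together with the antipodal property of Proposition \ref{p antipodes ellinfty} to rewrite the combination $\alpha\Delta(e_n)+\beta\Delta(e_m)$ as a difference of two $\Delta$-images.

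First I would use \eqref{eq Delta takes out complex or conjugate scalar at a coordinate} to write $\alpha\Delta(e_n) = \Delta(\sigma_n^{-1}(\alpha) e_n)$ and $\beta\Delta(e_m) = \Delta(\sigma_m^{-1}(\beta) e_m)$; here $\sigma_n$ is either the identity or conjugation, so $\sigma_n^{-1}(\alpha)$ is just $\alpha$ or $\overline{\alpha}$, and in either case it is an element of $\mathbb{T}$. Thus both $\alpha\Delta(e_n)$ and $\beta\Delta(e_m)$ are honest $\Delta$-images of spherical multiples of basis vectors. Next, to turn the sum into a difference so that the isometry can be applied, I would invoke Proposition \ref{p antipodes ellinfty}: writing $\beta\Delta(e_m) = -(-\beta)\Delta(e_m) = -\Delta(\sigma_m^{-1}(-\beta) e_m)$, so that
$$\alpha\Delta(e_n) + \beta\Delta(e_m) = \Delta(\sigma_n^{-1}(\alpha) e_n) - \Delta(\sigma_m^{-1}(-\beta) e_m).$$

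Since $\Delta$ is an isometry on the sphere, the norm of this difference equals the norm of the difference of the corresponding preimages, namely
$$\|\sigma_n^{-1}(\alpha) e_n - \sigma_m^{-1}(-\beta) e_m\|_\infty.$$
Because $n\neq m$, these two terms are supported on disjoint coordinates, and each coefficient lies in $\mathbb{T}$, so the supremum norm of their difference is exactly $\max\{|\sigma_n^{-1}(\alpha)|,|\sigma_m^{-1}(-\beta)|\} = 1$. This yields the desired equality $\|\alpha\Delta(e_n)+\beta\Delta(e_m)\|=1$ for all $\alpha,\beta\in\mathbb{T}$, and Lemma \ref{l characterization CMorthog in sphere} then upgrades this to complete $M$-orthogonality.

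I do not anticipate a serious obstacle here; the main point requiring care is the bookkeeping with $\sigma_n$ and $\sigma_m$ — confirming that applying conjugation (when $n$ or $m$ lies in $\Gamma_2^{\Delta}$) still leaves the coefficients on the unit circle, so that the final supremum norm is genuinely $1$ regardless of which of $\Gamma_1^{\Delta},\Gamma_2^{\Delta}$ the indices fall in. The combination of the scalar-extraction identity and Proposition \ref{p antipodes ellinfty} is exactly what makes the sum convert cleanly into an isometrically computable difference, and disjointness of the supports does the rest.
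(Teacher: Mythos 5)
Your proposal is correct and follows essentially the same route as the paper: both rewrite $\alpha\Delta(e_n)+\beta\Delta(e_m)$ as a difference of two $\Delta$-images using the scalar-extraction identity \eqref{eq Delta takes out complex or conjugate scalar at a coordinate} together with the antipodal property of Proposition \ref{p antipodes ellinfty}, pull the norm back through the isometry to a disjointly supported combination in $\mathcal{L}^{\infty}(\Gamma)$, and conclude via Lemma \ref{l characterization CMorthog in sphere}. The only cosmetic difference is your writing $\sigma_n^{-1}$ where the paper writes $\sigma_n$, which is harmless since each $\sigma_n$ is an involution.
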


\begin{proof} Take $\lambda,\mu$ in $\mathbb{T}$. By applying \eqref{eq Delta takes out complex or conjugate scalar at a coordinate} and Proposition \ref{p imaginary ellinfty} we get $$ \| \lambda \Delta ( e_n) +\mu \Delta (e_m) \| = \|  \Delta (\sigma_n(\lambda) e_n) + \Delta (\sigma_m(\mu) e_m) \| $$ $$= \| \Delta (\sigma_n(\lambda) e_n) - \Delta (-\sigma_m(\mu) e_m) \| = \| \sigma_n(\lambda) e_n  +\sigma_m(\mu) e_m \| = 1.$$ The desired conclusion follows from Lemma \ref{l characterization CMorthog in sphere}.
\end{proof}

We shall establish next a series of strengthened versions and consequences of the above corollary.

\begin{proposition}\label{p orthogonal have CM orthog images 2} Let $\Delta : S(\mathcal{L}^{\infty}(\Gamma))\to S(X)$ be a surjective isometry.
Let $n_1,\ldots,n_k$ be different elements in $\Gamma$. Then the identity $$\sum_{j=1}^{k}  \Delta(\alpha_j e_{n_j} ) = \Delta \left(\sum_{j=1}^k \alpha_j e_{n_j} \right)$$ holds for every $\alpha_{1},\ldots, \alpha_k$ in $\mathbb{T}.$ As a consequence, the set $\{\Delta(e_{n_1}),\ldots,\Delta( e_{n_{k}})\}$ is completely $M$-orthogonal, that is, $$\left\|\sum_{j=1}^k \alpha_j \Delta ( e_{n_j}) \right\| = \max \{|\alpha_j| : j=1,\ldots,k \},$$ for every $\alpha_1,\ldots,\alpha_k$ in $\mathbb{C}.$
\end{proposition}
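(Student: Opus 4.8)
The plan is to prove the displayed identity by induction on $k$ and to deduce the complete $M$-orthogonality of $\{\Delta(e_{n_1}),\ldots,\Delta(e_{n_k})\}$ as an immediate consequence. Write $z:=\sum_{j=1}^k\alpha_j e_{n_j}$ and $u:=\sum_{j=1}^k\Delta(\alpha_j e_{n_j})$; the goal of the inductive step is to show $\Delta(z)=u$. The case $k=1$ is trivial, so I assume the identity holds for every family of at most $k-1$ distinct indices with unimodular coefficients.

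First I would establish that $u\in S(X)$. Splitting off the last summand and applying the induction hypothesis to $n_1,\ldots,n_{k-1}$ gives $\sum_{j=1}^{k-1}\Delta(\alpha_j e_{n_j})=\Delta\big(\sum_{j=1}^{k-1}\alpha_j e_{n_j}\big)$, so that $u=\Delta(z')+\Delta(\alpha_k e_{n_k})$ with $z':=\sum_{j=1}^{k-1}\alpha_j e_{n_j}$. Using Proposition \ref{p antipodes ellinfty} to rewrite $\Delta(\alpha_k e_{n_k})=-\Delta(-\alpha_k e_{n_k})$ and then the fact that $\Delta$ is a surjective isometry, I obtain $\|u\|=\|\Delta(z')-\Delta(-\alpha_k e_{n_k})\|=\|z'+\alpha_k e_{n_k}\|=1$, the last equality because $z'$ and $\alpha_k e_{n_k}$ have disjoint supports. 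Hence $u\in S(X)$ and, by surjectivity (and injectivity) of $\Delta$, there is a unique $v\in S(\mathcal{L}^\infty(\Gamma))$ with $\Delta(v)=u$; it remains to identify $v$ with $z$.

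Next I would pin down the coordinates of $v$ using the support functionals. For a fixed $j$ and any $\varphi\in\hbox{supp}(n_j,\alpha_j)$, which is nonempty by Lemma \ref{l existence of support functionals for the image of a face ellinfty}, the term $\Delta(\alpha_j e_{n_j})$ contributes $\varphi\Delta(\alpha_j e_{n_j})=1$, while every other term satisfies $(\alpha_i e_{n_i})(n_j)=0$ and hence $\varphi\Delta(\alpha_i e_{n_i})=0$ by Proposition \ref{p kernel of the support ellinfty}; thus $\varphi\Delta(v)=\varphi u=1$, which forces $v\in A(n_j,\alpha_j)$, i.e. $v(n_j)=\alpha_j$. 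For an index $m\notin\{n_1,\ldots,n_k\}$ with $v(m)=c\neq0$, I would run the truncation trick from the proof of Proposition \ref{p imaginary ellinfty}: note first $|c|<1$, since $|c|=1$ would give $v\in A(m,v(m))$ and then $\varphi\Delta(v)=1\neq0=\varphi u$ for $\varphi\in\hbox{supp}(m,v(m))$. Now let $y$ agree with $v$ off $m$ and set $y(m)=c/|c|=:\lambda_0$, so $y\in A(m,\lambda_0)$ and $\|y-v\|=1-|c|<1$. Picking $\varphi\in\hbox{supp}(m,\lambda_0)$ we have $\varphi\Delta(y)=1$, whereas $\varphi u=\sum_j\varphi\Delta(\alpha_j e_{n_j})=0$ by Proposition \ref{p kernel of the support ellinfty} (each $\alpha_j e_{n_j}$ vanishes at $m$); then $1=|\varphi\Delta(y)-\varphi\Delta(v)|\le\|y-v\|=1-|c|<1$, a contradiction. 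Therefore $v(m)=0$ for every such $m$, whence $v=z$ and $\Delta(z)=u$.

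The main obstacle is the norm estimate $\|u\|=1$: a naive attempt to deduce it from the pairwise complete $M$-orthogonality of Corollary \ref{c orthogonal have CM orthog images} fails, since pairwise complete $M$-orthogonality does not imply the joint version (in the $C(\mathcal{B}_{X^*})$ picture, $|x_i|+|x_j|\le1$ for all pairs does not force $\sum_i|x_i|\le1$). The induction circumvents this by folding $\sum_{j<k}\Delta(\alpha_j e_{n_j})$ back into the single value $\Delta(z')$ before invoking antipodal symmetry, reducing the norm computation to the disjointness identity $\|z'+\alpha_k e_{n_k}\|=1$. Finally, the stated consequence is immediate: by \eqref{eq Delta takes out complex or conjugate scalar at a coordinate} and the identity just proved, $\big\|\sum_{j=1}^k\beta_j\Delta(e_{n_j})\big\|=\big\|\Delta\big(\sum_{j=1}^k\sigma_{n_j}(\beta_j)e_{n_j}\big)\big\|=1$ for all $\beta_1,\ldots,\beta_k\in\mathbb{T}$, since the argument is a norm-one element and $\Delta$ maps into $S(X)$; Lemma \ref{l characterization CMorthog in sphere} then yields the complete $M$-orthogonality of $\{\Delta(e_{n_1}),\ldots,\Delta(e_{n_k})\}$.
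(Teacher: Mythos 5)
Your proof is correct and follows the paper's overall strategy: induction on $k$, with the crucial norm-one estimate for $u=\sum_{j}\Delta(\alpha_j e_{n_j})$ obtained exactly as in the paper (fold the first $k-1$ terms into $\Delta(z')$ by the induction hypothesis, then use Proposition \ref{p antipodes ellinfty} and the isometry property to reduce to the disjoint-support identity $\|z'+\alpha_k e_{n_k}\|=1$), followed by passage to a preimage $v$ and elimination of all coordinates outside $\{n_1,\ldots,n_k\}$ via support functionals and the truncation trick. The one place where you genuinely diverge is the identification of the remaining coordinates: the paper first concludes only that $x=\sum_j \mu_j e_{n_j}$ with $\max_j|\mu_j|=1$, and then pins down $\mu_l=\alpha_l$ through a norm computation, namely $2=\|\Delta(x)+\Delta(\alpha_l e_{n_l})\|=\|x+\alpha_l e_{n_l}\|=\max\{|\mu_j|:j\neq l\}\vee|\mu_l+\alpha_l|$, which forces $|\mu_l+\alpha_l|=2$; you instead exploit the full equality $\varphi^{-1}(\{1\})=\Delta(A(n_j,\alpha_j))$ built into the definition of $\hbox{supp}(n_j,\alpha_j)$ together with the injectivity of $\Delta$: from $\varphi\Delta(v)=\varphi u=1$ you conclude $\Delta(v)\in\Delta(A(n_j,\alpha_j))$ and hence $v(n_j)=\alpha_j$ directly. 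This shortcut is legitimate (the paper only uses the inclusion $\Delta(A(n_j,\alpha_j))\subseteq\varphi^{-1}(\{1\})$ in this argument and pays for it with the extra norm computation) and it makes the last third of the proof noticeably shorter. Your closing observation about why Corollary \ref{c orthogonal have CM orthog images} alone cannot give the result — pairwise complete $M$-orthogonality does not imply the joint version — is precisely the point of the inductive scheme, and your deduction of the final statement from \eqref{eq Delta takes out complex or conjugate scalar at a coordinate} and Lemma \ref{l characterization CMorthog in sphere} matches the paper's.
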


\begin{proof} We shall argue by induction on $k$. The case $k=1$ is clear. Let us assume that the desired statement is true for $k-1$ with $k\geq 2$. We shall first show that \begin{equation}\label{eq 1 22 08} \left\|\sum_{j=1}^{k}  \Delta(\alpha_j e_{n_j} ) \right\| = 1.
\end{equation}

By the induction hypothesis we have $$ \sum_{j=1}^{k}  \Delta(\alpha_j e_{n_j} ) = \Delta \left(\sum_{j=1}^{k-1} \alpha_j e_{n_j}\right) + \Delta(\alpha_k e_{n_k}),$$ and thus, by the assumptions on $\Delta$ and Proposition \ref{p imaginary ellinfty} (see also \eqref{eq Delta takes out complex or conjugate scalar at a coordinate}) we deduce that  $$\left\|\sum_{j=1}^{k}  \Delta(\alpha_j e_{n_j} ) \right\|\! = \!\left\| \Delta \left(\sum_{j=1}^{k-1} \alpha_j e_{n_j}\right) + \Delta(\alpha_k e_{n_k})\right\|\!=\! \left\| \Delta \left(\sum_{j=1}^{k-1} \alpha_j e_{n_j}\right) - \Delta(-\alpha_k e_{n_k})\right\|$$ $$=\left\| \sum_{j=1}^{k-1} \alpha_j e_{n_j} + \alpha_k e_{n_k}\right\| = 1,$$ which proves the claim in \eqref{eq 1 22 08}.\smallskip

Since, by \eqref{eq 1 22 08}, $\displaystyle \sum_{j=1}^{k}  \Delta(\alpha_j e_{n_j} )\in S(X),$ there exists $x\in S(\mathcal{L}^{\infty}(\Gamma))$ satisfying $$\Delta(x) =  \displaystyle \sum_{j=1}^{k}  \Delta(\alpha_j e_{n_j} ) = \Delta \left(\sum_{j=1}^{k-1} \alpha_j e_{n_j}\right) + \Delta(\alpha_k e_{n_k}).$$ To prove the first statement, it suffices to show that \begin{equation}\label{eq 2 22 08} x= \sum_{j=1}^{k} \alpha_j e_{n_j}.
\end{equation} To this end, pick an arbitrary $m\in \Gamma\backslash \{n_1,\ldots, n_k\}$. If $|x(m)|=1$, we take $\phi\in \hbox{supp}(m, x(m))$. By construction $\phi \Delta (x) = 1$. However, having in mind that $\alpha_j e_{n_j}(m)=0$ for all $j$, an application of Proposition \ref{p kernel of the support ellinfty} implies that $$1=\phi \Delta (x) = \phi \left(\sum_{j=1}^{k}  \Delta(\alpha_j e_{n_j} )\right) = \sum_{j=1}^{k}  \phi\Delta(\alpha_j e_{n_j} )=0,$$ which is impossible.\smallskip

Suppose now that $0<|x(m)|<1.$ Let $y \in S(\mathcal{L}^{\infty} (\Gamma))$ be the element defined by $y(n) = x(n)$ for all $n\neq m,$ and $y(m) = x(m)/{|x(m)|}$. Clearly $\|x-y\| = 1- |x(m)|,$ and given $\phi \in \hbox{supp}(m, x(m)/{|x(m)|}),$ Lemma \ref{l existence of support functionals for the image of a face ellinfty} implies that $\phi \Delta (y)=1$. Combining all these facts together we have $$1- |\phi \Delta (x)| = \phi \Delta (y) - |\phi \Delta (x)| \leq |\phi \Delta (y) - \phi \Delta (x)|\leq \| y-x\| = 1- |x(m)|,$$ which shows that   $$0< |x(m)| \leq |\phi \Delta (x)| = \left|\phi \left( \sum_{j=1}^{k}  \Delta(\alpha_j e_{n_j} ) \right)\right|  $$ $$\leq  \sum_{j=1}^{k}  \left|\phi \left( \Delta (\alpha_j e_{n_j})\right)\right|= \hbox{(again by Proposition \ref{p kernel of the support ellinfty})}= 0,$$ providing a contradiction. Therefore $x(m)=0,$ for every $m\in \Gamma\backslash \{n_1,\ldots, n_k\}$. We have therefore shown that $\displaystyle x= \sum_{j=1}^{k} \mu_j e_{n_j},$ with $\mu_1,\ldots,\mu_k\in \mathbb{C}$ and $\max\{|\mu_j|:1\leq j\leq k\}=1$.\smallskip

Since $|\alpha_{l}|=1$ for every $1\leq l\leq k$, given $\phi \in \hbox{supp}(n_l,\alpha_l),$ we deduce from Lemma \ref{l existence of support functionals for the image of a face ellinfty} and Proposition \ref{p kernel of the support ellinfty} that $$2\geq \left\| \Delta(x) +  \Delta (\alpha_{l} e_{n_{l}}) \right\| \geq\left|\phi (\Delta(x) + \Delta (\alpha_{l} e_{n_{l}})) \right|$$ $$= \left|\sum_{j=1}^k \phi \left( \Delta (\alpha_j e_{n_j})\right) +\phi (\Delta (\alpha_{l} e_{n_{l}}))\right| = 2 \left|\phi (\Delta (\alpha_{l} e_{n_{l}}))\right|=2,$$ and it follows from the assumptions and Proposition \ref{p antipodes ellinfty} that
$$ 2= \left\| \Delta(x) +  \Delta (\alpha_{l} e_{n_{l}}) \right\| =  \left\| x +\alpha_{l}\ e_{n_{l}} \right\| = \max\{ |\mu_j| : j\neq l\}\vee |\mu_{l}+\alpha_{l}|,$$ which implies that $$ |\mu_{l}+ \alpha_{l}| =2,$$ and thus $\alpha_{l}= \mu_{l}$ for every $1\leq l\leq k$, which concludes the proof of \eqref{eq 2 22 08}.\smallskip

To prove the last affirmation, let $\alpha_1,\ldots,\alpha_k$ be arbitrary elements in $\mathbb{T}$. Applying Proposition \ref{p imaginary ellinfty} (see \eqref{eq Delta takes out complex or conjugate scalar at a coordinate}) and \eqref{eq 1 22 08} we have $$\left\|\sum_{j=1}^k \alpha_k \Delta(e_{n_k}) \right\|=  \left\|\sum_{j=1}^k  \Delta(\sigma_k (\alpha_k) e_{n_k}) \right\| = 1.$$ Finally, Lemma \ref{l characterization CMorthog in sphere} gives the desired conclusion.
\end{proof}

One more technical result is separating us from our first main goal.

\begin{proposition}\label{p algebraic ellinfty} Let $\Delta : S(\mathcal{L}^{\infty} (\Gamma))\to S(X)$ be a surjective isometry.
Then, for every $n_1,\ldots, n_k\in \Gamma$ and every $\lambda_1,\ldots,\lambda_k\in \mathbb{C}\backslash\{0\}$ with $\max\{|\lambda_1|,\ldots,|\lambda_k|\}=1$,
we have $$\Delta \left(\sum_{j=1}^{k} \lambda_j e_{n_j}\right) = \sum_{j=1}^{k} \sigma_{n_j}(\lambda_j) \Delta (e_{n_j}).$$
\end{proposition}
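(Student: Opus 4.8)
The plan is to set $u := \sum_{j=1}^{k} \sigma_{n_j}(\lambda_j)\,\Delta(e_{n_j})$ and to prove that $\Delta\big(\sum_{j}\lambda_j e_{n_j}\big)=u$. First, the complete $M$-orthogonality of $\{\Delta(e_{n_1}),\ldots,\Delta(e_{n_k})\}$ (Proposition \ref{p orthogonal have CM orthog images 2}), together with the hypothesis $\max_j|\lambda_j|=1$, gives $\|u\|=\max_j|\sigma_{n_j}(\lambda_j)|=\max_j|\lambda_j|=1$, so $u\in S(X)$. By surjectivity of $\Delta$ there is $w\in S(\mathcal{L}^{\infty}(\Gamma))$ with $\Delta(w)=u$, and it suffices to show that $w=\sum_{j}\lambda_j e_{n_j}$.

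First I would localize the support of $w$, reproducing the support-functional argument of Proposition \ref{p orthogonal have CM orthog images 2}, to show that $w(m)=0$ for every $m\in\Gamma\setminus\{n_1,\ldots,n_k\}$. If $|w(m)|=1$, pick $\phi\in\hbox{supp}(m,w(m))$; then $\phi\Delta(w)=1$, whereas $\phi(u)=\sum_j\sigma_{n_j}(\lambda_j)\,\phi\Delta(e_{n_j})=0$ because $e_{n_j}(m)=0$ and Proposition \ref{p kernel of the support ellinfty} forces $\phi\Delta(e_{n_j})=0$, a contradiction. If $0<|w(m)|<1$, I would compare $w$ with the element $y$ obtained by replacing the $m$-th coordinate by $w(m)/|w(m)|$; choosing $\phi\in\hbox{supp}(m,w(m)/|w(m)|)$, so that $\phi\Delta(y)=1$ by definition of $\hbox{supp}$, the estimate $1-|\phi\Delta(w)|\le\|y-w\|=1-|w(m)|$ yields $|w(m)|\le|\phi(u)|=0$, again impossible. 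Hence $w=\sum_{j=1}^{k}\mu_j e_{n_j}$ with $\mu_j\in\mathbb{C}$ and $|\mu_j|\le 1$, and it only remains to prove $\mu_j=\lambda_j$.

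To compare the two coefficient families, for every $\beta=(\beta_1,\ldots,\beta_k)\in\mathbb{T}^k$ I would put $v=\sum_j\beta_j e_{n_j}$. By Proposition \ref{p orthogonal have CM orthog images 2} and \eqref{eq Delta takes out complex or conjugate scalar at a coordinate} we have $\Delta(v)=\sum_j\sigma_{n_j}(\beta_j)\Delta(e_{n_j})$, so applying the isometry $\Delta$ and using complete $M$-orthogonality on the $X$ side, together with the fact that each $\sigma_{n_j}$ is additive and modulus-preserving,
\[
\max_{1\le j\le k}|\mu_j-\beta_j|=\|w-v\|=\|u-\Delta(v)\|=\max_{1\le j\le k}|\lambda_j-\beta_j|.
\]
Thus $w$ and $\sum_j\lambda_j e_{n_j}$, regarded in $\ell_\infty^k$, are equidistant from every point of the distinguished torus $\mathbb{T}^k$.

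The core of the argument is to deduce $\mu_j=\lambda_j$ from this equidistance, and this is where I expect the main difficulty, since the outer maximum entangles the coordinates. My plan is to disentangle them by taking, for a fixed index $l$, the infimum over $\beta_j$ ($j\ne l$) on both sides of the displayed identity while keeping $\beta_l=s$ fixed. Since $\inf_{\beta_j\in\mathbb{T}}|\mu_j-\beta_j|=1-|\mu_j|$, this collapses to the one-variable identity
\[
\max\big(|\mu_l-s|,\,c_l^{\mu}\big)=\max\big(|\lambda_l-s|,\,c_l^{\lambda}\big),\qquad s\in\mathbb{T},
\]
with constants $c_l^{\mu}=\max_{j\ne l}(1-|\mu_j|)\le 1$ and $c_l^{\lambda}=\max_{j\ne l}(1-|\lambda_j|)\le 1$. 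Maximizing over $s\in\mathbb{T}$ gives $1+|\mu_l|=1+|\lambda_l|$, hence $|\mu_l|=|\lambda_l|=:\rho$; if $\rho=0$ then $\mu_l=\lambda_l=0$. If $\rho>0$, I would evaluate at the antipode $s_0=-\mu_l/\rho$: the left side equals $1+\rho$, so the right side must too, and because $c_l^{\lambda}\le 1<1+\rho$ this forces $|\lambda_l-s_0|=1+\rho$, which (as $|\lambda_l|=\rho$ and $|s_0|=1$) can occur only when $\lambda_l=\mu_l$. This yields $\mu_l=\lambda_l$ for every $l$, whence $w=\sum_j\lambda_j e_{n_j}$ and $\Delta\big(\sum_j\lambda_j e_{n_j}\big)=u$, completing the proof.
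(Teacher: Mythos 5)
Your proof is correct, and its endgame takes a genuinely different route from the paper's. The first half --- producing $w$ with $\Delta(w)=u$ and using functionals in $\hbox{supp}(m,\cdot)$ together with Proposition \ref{p kernel of the support ellinfty} to kill the coordinates of $w$ outside $\{n_1,\ldots,n_k\}$ --- is exactly the paper's opening step. From there the paper identifies the coefficients by a lengthy case analysis: it first proves $\mu_j=\lambda_j$ whenever $|\lambda_j|=1$ or $|\mu_j|=1$ (via norms of the form $\|\Delta(x)+\Delta(\lambda_{j_0}e_{n_{j_0}})\|=2$ transferred through the isometry and Proposition \ref{p antipodes ellinfty}), then shows that no $\mu_j$ can vanish, and finally runs a finite recursion comparing $x$ with an auxiliary preimage $y$ of the vector obtained by flipping one sign in $u$, chasing a chain of indices $j_0,j_1,j_2,\ldots$ until the remaining coefficients are matched. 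You replace all of this by a single equidistance identity: since Proposition \ref{p orthogonal have CM orthog images 2} and \eqref{eq Delta takes out complex or conjugate scalar at a coordinate} give $\Delta(v)=\sum_j\sigma_{n_j}(\beta_j)\Delta(e_{n_j})$ for every torus combination $v=\sum_j\beta_j e_{n_j}$, complete $M$-orthogonality plus the additivity and modulus-preservation of each $\sigma_{n_j}$ turn the isometry condition into $\max_j|\mu_j-\beta_j|=\max_j|\lambda_j-\beta_j|$ on all of $\mathbb{T}^k$. Your separation-of-variables step is sound: the infimum over the independent variables $\beta_j$, $j\neq l$, passes inside the monotone outer maximum, $\inf_{\beta\in\mathbb{T}}|\mu-\beta|=1-|\mu|$ for $|\mu|\le 1$, and then maximizing in $s$ yields $|\mu_l|=|\lambda_l|$, while evaluation at the antipode $s_0=-\mu_l/|\mu_l|$ forces $\mu_l=\lambda_l$ through the equality case of the triangle inequality. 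What your route buys: it is shorter, treats all moduli uniformly, dispenses with the paper's non-vanishing claim and its index recursion, and uses the hypothesis $\lambda_j\neq 0$ only to make the antipode well defined (if $|\lambda_l|=0$ the maximization step alone gives $\mu_l=0$). What the paper's route buys: each comparison is against a single sphere point or one auxiliary preimage, so it needs no variational argument over tori, only the elementary implication $|\alpha+\beta|=2\Rightarrow\alpha=\beta$ for $|\alpha|=1$, $|\beta|\le 1$. Two small points to polish in your write-up: state explicitly that $n_1,\ldots,n_k$ are pairwise distinct (both arguments need it), and for $k=1$ read the empty maxima $c_l^{\mu},c_l^{\lambda}$ as $0$.
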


\begin{proof} Proposition \ref{p orthogonal have CM orthog images 2} guarantees that the set $\{\Delta (e_{n_1}),\ldots, \Delta (e_{n_k})\}\subset S(X)$ is completely $M$-orthogonal, therefore $\displaystyle \sum_{j=1}^{k} \sigma_{n_j}(\lambda_j) \Delta (e_{n_j})$ is an element in the unit sphere of $X$. By the surjectivity of $\Delta$ there exists $x$ in  $S(\mathcal{L}^{\infty} (\Gamma))$ satisfying $\Delta (x) = \displaystyle \sum_{j=1}^{k} \sigma_{n_j}(\lambda_j) \Delta (e_{n_j})$. We shall show that $\displaystyle x=\sum_{j=1}^{k} \lambda_j e_{n_j}$.\smallskip

In the next four paragraphs we shall follow arguments similar to those in the proof of Proposition \ref{p imaginary ellinfty} (see also \eqref{eq Delta takes out complex or conjugate scalar at a coordinate}). Take $m\in \Gamma\backslash \{n_1,\ldots, n_k\}$. If $|x(m)|=1$, we take $\phi\in \hbox{supp}(m, x(m))$. Lemma \ref{l existence of support functionals for the image of a face ellinfty} implies that $\phi \Delta (x) = 1$. However, having in mind that $e_{n_j} (m) =0$ for every $j$, Proposition \ref{p kernel of the support ellinfty} applies to prove that $$1=\phi \Delta (x) = \phi \left(\sum_{j=1}^{k} \sigma_{n_j}(\lambda_j)\ \Delta (e_{n_j})\right) = \sum_{j=1}^{k} \sigma_{n_j}(\lambda_j)\ \phi (\Delta (e_{n_j}))=0,$$ which is impossible.\smallskip

Suppose now that $0<|x(m)|<1.$ Let $y \in S(\mathcal{L}^{\infty} (\Gamma))$ be the element defined by $y(n) = x(n)$ for all $n\neq m$ and $y(m) = x(m)/{|x(m)|}$. Clearly $\|x-y\| = 1- |x(m)|.$ Given $\phi \in \hbox{supp}(m, x(m)/{|x(m)|}),$ Lemma \ref{l existence of support functionals for the image of a face ellinfty} assures that $\phi \Delta (y)=1$. Combining all these facts together we have $$1- |\phi \Delta (x)| = \phi \Delta (y) - |\phi \Delta (x)| \leq |\phi \Delta (y) - \phi \Delta (x)|\leq \| y-x\| = 1- |x(m)|,$$ which shows that   $$0< |x(m)| \leq |\phi \Delta (x)| = \left|\phi \left( \sum_{j=1}^{k} \sigma_{n_j}(\lambda_j) \Delta (e_{n_j})\right)\right|  $$ $$\leq  \sum_{j=1}^{k} |\sigma_{n_j}(\lambda_j)| \ \left|\phi \left( \Delta (e_{n_j})\right)\right|=  \hbox{(again by Proposition \ref{p kernel of the support ellinfty})}= 0,$$ providing a contradiction. Therefore $x(m)=0$ for every $m\in \Gamma\backslash \{n_1,\ldots, n_k\}$.\smallskip

We have shown that $\displaystyle x=\sum_{j=1}^{k} \mu_j e_{n_j}$ and $\displaystyle \Delta \left(\sum_{j=1}^{k} \mu_j e_{n_j}\right) = \sum_{j=1}^{k} \sigma_{n_j}(\lambda_j) \Delta (e_{n_j}),$ where $\mu_1,\ldots, \mu_k\in \mathbb{C}$ with $\max\{|\mu_j|: j=1,\ldots,k\}=1$.\smallskip

Our next goal is to prove that $\mu_j=\lambda_j$ for all $j\in\{1,\ldots,k\}$.\smallskip

We consider the spectrum of $x$, $\sigma (x) =\{ \mu_1,\ldots, \mu_k \}$. By a little abuse of notation the set  $\sigma (\Delta(x)) =\{ \sigma_{n_1}(\lambda_1),\ldots, \sigma_{n_k}(\lambda_k) \}$ will be called the spectrum of $\Delta(x)$. Up to an appropriate reordering we may assume that $$1=|\lambda_1|=\ldots =|\lambda_{m}| \geq |\lambda_{m+1}|\geq  \ldots \geq |\lambda_k|.$$

In a first step, let us take an index $j_0$ such that $|\lambda_{j_0}|=1$. It follows from the assumptions, Proposition \ref{p orthogonal have CM orthog images 2} and Proposition \ref{p imaginary ellinfty} (see also \eqref{eq Delta takes out complex or conjugate scalar at a coordinate}) that
$$ 2= \left\| \Delta(x) + \sigma_{n_{j_0}} (\lambda_{j_0}) \Delta (e_{n_{j_0}}) \right\|=\left\| \Delta(x) +  \Delta (\lambda_{j_0} e_{n_{j_0}}) \right\| =  \left\| x +\lambda_{j_0} e_{n_{j_0}} \right\| $$ $$= \max\{ |\mu_j| : j\neq j_0\}\vee |\mu_{j_0}+\lambda_{j_0}|,$$ which implies that $$ |\lambda_{j_0}+ \mu_{j_0}| =2,$$ and thus $\mu_{j_0}= \lambda_{j_0}$. We have shown that \begin{equation}\label{coeficients coincide with modulus one reverse} \mu_{j}=\lambda_{j}, \hbox{ for all $j\in\{1,\ldots,k\}$ with $|\lambda_{j}|=1$.}
\end{equation}

On the other hand, let us choose an index $j_0$ such that $|\mu_{j_0}|=1$. It follows from Proposition \ref{p orthogonal have CM orthog images 2} and Proposition \ref{p imaginary ellinfty} (see also \eqref{eq Delta takes out complex or conjugate scalar at a coordinate}) that $$\max\{ |\sigma_{n_j}(\lambda_j)| : j\neq j_0\}\vee |\sigma_{n_{j_0}}(\lambda_{j_0})+\sigma_{n_{j_0}} (\mu_{j_0})|=\left\| \Delta(x) + \sigma_{n_{j_0}} (\mu_{j_0}) \Delta (e_{n_{j_0}}) \right\|$$ $$ = \left\| \Delta(x) +  \Delta (\mu_{j_0}\ e_{n_{j_0}}) \right\|= \left\| \Delta(x) -  \Delta (-\mu_{j_0}\ e_{n_{j_0}}) \right\| =  \left\| x +\mu_{j_0}\ e_{n_{j_0}} \right\|= 2 |\mu_{j_0}| = 2,$$ which implies that $$ |\sigma_{n_{j_0}}(\lambda_{j_0})+\sigma_{n_{j_0}} (\mu_{j_0})| =2,$$ and thus $\sigma_{n_{j_0}}(\lambda_{j_0})= \sigma_{n_{j_0}} (\mu_{j_0})$, or equivalently $\lambda_{j_0}= \mu_{j_0}$. We have shown that \begin{equation}\label{coeficients coincide with modulus one} \lambda_{j}= \mu_{j}, \hbox{ for all $j\in\{1,\ldots,k\}$ with $|\mu_{j}|=1$.}
\end{equation}

Therefore  $\sigma (x) =\{ \lambda_1,\ldots,\lambda_{m},\mu_{m+1},\ldots \mu_k \}$ with $|\lambda_1|=\ldots=|\lambda_{m}|=1,$ and $|\mu_j| <1$ for all $j\geq m+1$.\smallskip

We claim that $\mu_j\neq 0$ for every $j\in\{1,\ldots,k\}$. Namely, we already know that $\mu_j\neq 0$ for all $1\leq j\leq m$. Arguing by contradiction, we assume the existence of $j_0\in \{m+1,\ldots, k\}$ such that $\mu_{j_0} =0$. Take the element $\displaystyle z=-\frac{\lambda_{j_0}}{|\lambda_{j_0}|} e_{n_{j_0}}+ \sum_{j=1}^m \lambda_j e_{n_j}$. Proposition \ref{p orthogonal have CM orthog images 2} assures that $$\Delta(z) = -\frac{\sigma_{n_{j_0}} (\lambda_{j_0})}{|\lambda_{j_0}|} \Delta(e_{n_{j_0}})+\sum_{j=1}^{m} \sigma_{n_j}(\lambda_j) \Delta (e_{n_j}).$$ Another application of Proposition \ref{p orthogonal have CM orthog images 2} gives $$1<1+|\lambda_{j_0}|=(1+|\lambda_{j_0}|)\vee \max\{|\lambda_{j}|: m+1\leq j \leq k, j\neq j_0\} $$ $$=\left\| \sum_{j=1}^{k} \sigma_{n_j}(\lambda_j) \Delta (e_{n_j}) -  \left(-\frac{\sigma_{n_{j_0}} (\lambda_{j_0})}{|\lambda_{j_0}|} \Delta(e_{n_{j_0}})+\sum_{j=1}^{m} \sigma_{n_j}(\lambda_j) \Delta (e_{n_j})\right) \right\|$$ $$=\|\Delta(x) -\Delta (z)\|=\|x-z\|= \left|\frac{\lambda_{j_0}}{|\lambda_{j_0}|}\right|\vee \max\{|\mu_{j}|: m+1\leq j \leq k, j\neq j_0\}=1,$$ which is impossible. Therefore $\mu_j\neq 0$ for every $j$.\smallskip

We shall next prove that $\lambda_{m+1} = \mu_{m+1}$.\smallskip

We pick now $j_0\in \{m+1,\ldots, k\}$. In this case $0<|\mu_{j_0}|<1$. We know that $$\max\{ |\sigma_{n_j}(\lambda_j)| : j\neq j_0\}\vee \left|\sigma_{n_{j_0}}(\lambda_{j_0})+\frac{\sigma_{n_{j_0}} (\mu_{j_0})}{|\mu_{j_0}|} \right|$$ $$=\left\| \Delta(x) + \frac{\sigma_{n_{j_0}} (\mu_{j_0})}{|\mu_{j_0}|} \Delta (e_{n_{j_0}}) \right\|=\left\| \Delta(x) +  \Delta \left(\frac{\mu_{j_0}}{|\mu_{j_0}|} e_{n_{j_0}}\right) \right\| $$ $$=\left\| x +  \frac{\mu_{j_0}}{|\mu_{j_0}|} e_{n_{j_0}} \right\|=\left| \mu_{j_0} +  \frac{\mu_{j_0}}{|\mu_{j_0}|} \right| = 1+ |\mu_{j_0}|>1,$$ which implies that
\begin{equation}\label{eq distance 2}  1+ |\mu_{j_0}| =\left|\sigma_{n_{j_0}}(\lambda_{j_0})+\frac{\sigma_{n_{j_0}} (\mu_{j_0})}{|\mu_{j_0}|} \right|= \left|\lambda_{j_0}+\frac{\mu_{j_0}}{|\mu_{j_0}|} \right|
\leq \left|\lambda_{j_0}\ \right|+ 1,
\end{equation} and hence $|\mu_{j_0}| \leq \left|\lambda_{j_0}\ \right|,$ for every $j_0 \geq m+1$.\smallskip

Now taking $y\in S(\mathcal{L}^{\infty} (\Gamma))$ such that $$\displaystyle \Delta (y) = - \sigma_{n_{m+1}}(\lambda_{m+1}) \Delta (e_{n_{m+1}})+ \sum_{ m+1\neq j =1}^k \sigma_{n_j}(\lambda_j) \Delta (e_{n_j}),$$ we know from the above arguments that $\displaystyle y=\sum_{j=1}^{k} \gamma_j e_{n_j}$ and $$\Delta \left(\sum_{j=1}^{k} \gamma_j e_{n_j}\right) = - \sigma_{n_{m+1}}(\lambda_{m+1}) \Delta (e_{n_{m+1}})+ \sum_{ m+1\neq j =1}^k \sigma_{n_j}(\lambda_j) \Delta (e_{n_j}),$$ where $\gamma_1,\ldots, \gamma_k\in \mathbb{C}\backslash \{0\}$ with $\max\{|\gamma_j|: j=1,\ldots,k\}=1$, $\gamma_j = \lambda_j$ for all $1\leq j\leq m,$ $0<|\gamma_j|<1$ for all $m+1\leq j\leq k$ (compare the arguments leading to \eqref{coeficients coincide with modulus one reverse} and \eqref{coeficients coincide with modulus one}), and by Proposition \ref{p orthogonal have CM orthog images 2} $$\max\{ |\sigma_{n_j}(\lambda_j)| : j\neq m+1\}\vee \left|-\sigma_{n_{m+1}}(\lambda_{m+1})+\frac{\sigma_{n_{m+1}} (\gamma_{m+1})}{|\gamma_{m+1}|} \right|$$ $$= \left\|\Delta(y) + \frac{\sigma_{n_{m+1}} (\gamma_{m+1})}{|\gamma_{m+1}|} \Delta (e_{n_{m+1}}) \right\| = \left\|y + \frac{\gamma_{m+1}}{|\gamma_{m+1}|} e_{n_{m+1}} \right\|$$ $$ = \left|\gamma_{m+1}+\frac{\gamma_{m+1}}{|\gamma_{m+1}|} \right| = 1+ |\gamma_{m+1}|>1,$$ and thus \begin{equation}\label{eq 2808 m+1} \left|-\lambda_{m+1}+\frac{\gamma_{m+1}}{|\gamma_{m+1}|} \right|= \left|-\sigma_{n_{m+1}}(\lambda_{m+1})+\frac{\sigma_{n_{m+1}} (\gamma_{m+1})}{|\gamma_{m+1}|} \right| = 1+ |\gamma_{m+1}|,
 \end{equation} and in particular $|\gamma_{m+1}| \leq \left|\lambda_{m+1}\ \right|.$ Actually, for each $j\geq m+2$, the equality $$\left\|\Delta(y) + \frac{\sigma_{n_{j}} (\gamma_{j})}{|\gamma_{j}|} \Delta (e_{n_{j}}) \right\| = \left\|y + \frac{\gamma_{j}}{|\gamma_{j}|} e_{n_{j}} \right\|$$ implies that \begin{equation}\label{eq 2808 j} \left|\lambda_{j}+\frac{\gamma_{j}}{|\gamma_{j}|} \right|= 1+ |\gamma_{j}|,
 \end{equation} and in particular $|\gamma_{j}| \leq \left|\lambda_{j}\ \right|$ for all $j\geq m+2$.\smallskip

Now, we compute \begin{equation}\label{eq 2808 distance x y} 2 |\lambda_{m+1}| = \|2 \sigma_{n_{m+1}}(\lambda_{m+1}) \Delta (e_{n_{m+1}})\| =\left\| \Delta (x) - \Delta (y)  \right\|
 \end{equation} $$ = \|x-y\| =\max \{|\mu_j-\gamma_j| :j\geq m+1\}.$$ So, there exists $j_0\geq m+1$ such that $|\mu_{j_0}-\gamma_{j_0}| = 2 |\lambda_{m+1}|.$ We deduce from the assumptions that $$2 |\lambda_{m+1}| = |\mu_{j_0}-\gamma_{j_0}| \leq |\mu_{j_0}|+|\gamma_{j_0}|\leq 2 |\lambda_{j_0}| \leq 2 |\lambda_{m+1}|,$$ which proves that $|\lambda_{j_0}| = |\mu_{j_0}|=|\gamma_{j_0}| = |\lambda_{m+1}|.$ Now, applying \eqref{eq distance 2} we get $$1+ |\lambda_{j_0}| = 1+ |\mu_{j_0}| = \left|\lambda_{j_0}+\frac{\mu_{j_0}}{|\mu_{j_0}|} \right|,$$ and thus $\mu_{j_0}=\lambda_{j_0}$. If $j_0 =m+1$ we obtain $\mu_{m+1}=\lambda_{m+1}$, as desired.\smallskip

If $j_0 \in \{m+2,\ldots,k\}$, by applying \eqref{eq 2808 j} we deduce that $\mu_{j_0}=\gamma_{j_0}$. In this case the equation in \eqref{eq 2808 distance x y} writes in the form $$2 |\lambda_{m+1}| =\left\| \Delta (x) - \Delta (y)  \right\| = \|x-y\| =\max \{|\mu_j-\gamma_j| :j\geq m+1, j\neq j_0\}.$$ Therefore, there exists $j_1\in\{m+1,\ldots, k\}$, $j_1\neq j_0$ such that $2 |\lambda_{m+1}|=|\mu_{j_1}-\gamma_{j_1}|,$ and the previous arguments show that $\mu_{j_1}=\lambda_{j_1}$ and $|\lambda_{j_1}| = |\mu_{j_1}|=|\gamma_{j_1}| = |\lambda_{m+1}|.$ If $j_1=m+1$ we have $\mu_{m+1}=\lambda_{m+1}$, otherwise it follows from \eqref{eq 2808 j} that $\mu_{j_1}=\lambda_{j_1}=\gamma_{j_1}$, and hence \eqref{eq 2808 distance x y} writes in the form $$2 |\lambda_{m+1}| =\left\| \Delta (x) - \Delta (y)  \right\| = \|x-y\| =\max \{|\mu_j-\gamma_j| :j\geq m+1, j\neq j_0,j_1\}.$$ We therefore obtain $j_2\in\{m+1,\ldots, k\}$, $j_2\neq j_0,j_1$ such that $2 |\lambda_{m+1}|=|\mu_{j_2}-\gamma_{j_2}|.$ Repeating the above arguments to $j_2$ we deduce that one of the next statement holds:
\begin{enumerate}[$(\checkmark)$] \item $j_2=m+1$ and $\mu_{m+1}=\lambda_{m+1}$;
\item There exists $j_3\in\{m+1,\ldots, k\}$, $j_3\neq j_0,j_1,j_2$ such that $2 |\lambda_{m+1}|=|\mu_{j_3}-\gamma_{j_3}|.$
\end{enumerate}
By repeating this argument a finite number of steps we derive that $\lambda_{m+1} = \mu_{m+1}$.\smallskip

Finally, the above arguments subsequently applied to $m+2,\ldots,k$ give $\mu_j = \lambda_j$ for all $j\geq m+1$.
\end{proof}

\begin{theorem}\label{t c0 satisfies the Mazur-Ulam property} Let $\Gamma$ be an infinite set. The complex space $c_0(\Gamma)$ satisfies the Mazur-Ulam property, that is, given a Banach space $X$, every surjective isometry $\Delta : S(c_0 (\Gamma))\to S(X)$ admits a unique extension to a surjective real linear isometry from $c_0(\Gamma)$ to $X$; in particular $X$ is isometrically isomorphic to $c_0(\Gamma)$.\end{theorem}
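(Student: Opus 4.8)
The plan is to construct the extension explicitly on finitely supported sequences, where Proposition~\ref{p algebraic ellinfty} already pins down the behaviour of $\Delta$, and then to pass to the limit by density. Write $c_{00}(\Gamma)$ for the dense subspace of $c_0(\Gamma)$ of finitely supported sequences, and define $T:c_{00}(\Gamma)\to X$ by
$$T(x):=\sum_{n\in\Gamma}\sigma_n(x(n))\,\Delta(e_n),$$
a finite sum. Since each $\sigma_n$ is either the identity or complex conjugation, it is additive and commutes with multiplication by real scalars, so $T$ is a real linear map on $c_{00}(\Gamma)$.

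The first step is to identify $T$ with $\Delta$ on the sphere. Given a norm-one $x\in c_{00}(\Gamma)$, expand it over its support as $x=\sum_{j=1}^k\lambda_j e_{n_j}$ with $\lambda_j\neq 0$ and $\max_j|\lambda_j|=\|x\|=1$; Proposition~\ref{p algebraic ellinfty} then gives $\Delta(x)=\sum_{j=1}^k\sigma_{n_j}(\lambda_j)\Delta(e_{n_j})=T(x)$, and since $\Delta$ takes values in $S(X)$ we obtain $\|T(x)\|=1$. For an arbitrary nonzero $x\in c_{00}(\Gamma)$, applying the same proposition to $x/\|x\|$ and using the real homogeneity of each $\sigma_n$ yields $T(x)=\|x\|\,\Delta(x/\|x\|)$, hence $\|T(x)\|=\|x\|$. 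A real linear norm-preserving map is automatically an isometry, so $T$ is an isometry on $c_{00}(\Gamma)$.

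Next I would extend and verify the remaining properties. As $c_{00}(\Gamma)$ is dense in $c_0(\Gamma)$ and $T$ is a real linear isometry there, it extends uniquely to a real linear isometry $T:c_0(\Gamma)\to X$. To check that this $T$ extends $\Delta$, note that the finitely supported unit vectors are dense in $S(c_0(\Gamma))$: every $x\in S(c_0(\Gamma))$ attains its norm at some coordinate $n_0$, so truncating $x$ to any finite set containing $n_0$ produces a finitely supported unit vector, and these converge to $x$ since $x\in c_0(\Gamma)$. Both $T|_{S(c_0(\Gamma))}$ and $\Delta$ are continuous and agree on this dense subset, whence $T|_{S(c_0(\Gamma))}=\Delta$. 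Surjectivity follows from $T(S(c_0(\Gamma)))=\Delta(S(c_0(\Gamma)))=S(X)$ together with positive homogeneity, and uniqueness is immediate, since any real linear extension of $\Delta$ is determined on $S(c_0(\Gamma))$ and hence everywhere. Finally, $X$ is the range of a surjective real linear isometry from $c_0(\Gamma)$, so it is isometrically isomorphic to $c_0(\Gamma)$.

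By this stage the substantive work has already been carried out in Proposition~\ref{p algebraic ellinfty}, so the only genuinely delicate point in the theorem itself is the density/continuity argument upgrading $T=\Delta$ from finitely supported unit vectors to all of $S(c_0(\Gamma))$. This rests on the fact that the supremum defining the norm of a $c_0(\Gamma)$ element is \emph{attained} at some coordinate, a feature that fails in $\ell_\infty(\Gamma)$ and explains why the present argument applies to $c_0(\Gamma)$ (and to the finite dimensional $\ell_\infty^m$) rather than to the full space $\ell_\infty(\Gamma)$.
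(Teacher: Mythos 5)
Your proof is correct, and its engine is the same as the paper's: the formula $x\mapsto\sum_{n}\sigma_n(x(n))\,\Delta(e_n)$, with Proposition \ref{p algebraic ellinfty} as the key input, followed by a density-and-continuity argument on the sphere. The difference is in how the limiting step is organized, and it is a genuine (if modest) structural difference. The paper defines $F$ on all of $c_0(\Gamma)$ at once as an infinite series and must verify convergence by showing the partial sums are Cauchy (itself a further application of Proposition \ref{p algebraic ellinfty} to normalized tail sums); it then establishes only that $F$ is contractive, recovering the isometric property a posteriori from the identification $F|_{S(c_0(\Gamma))}=\Delta$. You instead define $T$ on the dense subspace $c_{00}(\Gamma)$, where the sum is finite and no convergence question arises, note that Proposition \ref{p algebraic ellinfty} together with real homogeneity gives $T(x)=\|x\|\,\Delta(x/\|x\|)$, so that $T$ is already a real linear isometry on $c_{00}(\Gamma)$, and then invoke the standard fact that a linear isometry into a complete space extends uniquely from a dense subspace. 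This buys a shorter argument in which completeness of $X$ enters only through an abstract extension principle rather than an explicit Cauchy estimate, and the isometric character of the extension is visible before, not after, the limit; what the paper's version buys is an explicit series representation of the extension at every point of $c_0(\Gamma)$ (by uniqueness the two extensions coincide, so this is essentially cosmetic). Your remaining steps --- density of finitely supported unit vectors in $S(c_0(\Gamma))$ via norm attainment, surjectivity from positive homogeneity, uniqueness --- match the paper's, and your closing observation is exactly the right diagnosis: it is norm attainment and the density of $c_{00}(\Gamma)$ that confine this method to $c_0(\Gamma)$ and $\ell_\infty^m$, and their failure for $\ell_\infty(\Gamma)$ is precisely why that case is left as a conjecture at the end of the paper.
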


\begin{proof} Let $\Delta : S(c_0 (\Gamma))\to S(X)$ be a surjective isometry. Corollary \ref{c orthogonal have CM orthog images} assures that, for each finite subset $\Gamma_0\subset \Gamma$ the set $\{ \Delta (e_{n}) : n\in\Gamma_0 \}\subset S(X)$ is completely $M$-orthogonal. For each $n$ in $\Gamma$, let $\sigma_n : \mathbb{C}\to \mathbb{C}$ be the mapping defined by Proposition \ref{p imaginary ellinfty} and \eqref{eq Delta takes out complex or conjugate scalar at a coordinate}.\smallskip

We define a mapping $F : c_0(\Gamma) \to X$, given by $$F(x) := \sum_{n\in \Gamma} \sigma_n (x(n))\ \Delta(e_n).$$ We shall show that $F$ is well defined. For each $x\in c_0(\Gamma)$ there exists an at most countable subset $\Gamma_x$ such that $\{n\in \Gamma: x(n)\neq 0\}\subseteq \Gamma_x$ and $\displaystyle x= \sum_{n\in \Gamma_x} x(n) e_n$ and $(x(n))_{n\in \Gamma_x}$ can be regarded as a sequence in $c_0(\mathbb{N})$. Let us identify $\Gamma_x$ with $\mathbb{N}$. We claim that the sequence $\displaystyle \left(\sum_{1\leq n\in \Gamma_x}^m \sigma_n(x(n)) \Delta(e_n)\right)_{m}$ is Cauchy. Namely, given $\epsilon>0$, there exists $n_0\in \mathbb{N}$ such that $|x(n)|<\varepsilon$ for all $n\geq n_0$ in $\Gamma_x$. Pick $m_1,m_2\in \Gamma_x$ with $m_1> m_2\geq n_0$. Let $k_0\in \{m_2+1,\ldots, m_1\}$ satisfying $|x(k_0)| = \max\{|x(n)| : m_2+1\leq n \leq m_1\}$. If $|x(k_0)| =0$ we have $$\left\| \sum_{1\leq n\in \Gamma_x}^{m_1} \sigma_n(x(n)) \Delta(e_n) - \sum_{1\leq n\in \Gamma_x}^{m_2} \sigma_n(x(n)) \Delta(e_n)\right\| $$ $$= \left\| \sum_{m_2+1\leq n\in \Gamma_x}^{m_1} \sigma_n(x(n)) \Delta(e_n) \right\|= 0<\varepsilon.$$ If $|x(k_0)| >0$, then, by Proposition \ref{p algebraic ellinfty} we have $$\left\| \sum_{1\leq n\in \Gamma_x}^{m_1} \sigma_n(x(n)) \Delta(e_n) - \sum_{1\leq n\in \Gamma_x}^{m_2} \sigma_n(x(n)) \Delta(e_n)\right\| = \left\| \sum_{m_2+1\leq n\in \Gamma_x}^{m_1} \sigma_n(x(n)) \Delta(e_n) \right\|$$ $$\!=\!|x(k_0)|  \left\| \sum_{\stackrel{n\in \Gamma_x}{m_2+1\leq n}}^{m_1} \frac{\sigma_n(x(n))}{|x(k_0)|} \Delta(e_n) \right\| \!=\! |x(k_0)|  \left\| \Delta\left(\sum_{\stackrel{n\in \Gamma_x}{m_2+1\leq n}}^{m_1} \frac{x(n)}{|x(k_0)|} e_n \right) \right\|\! =\! |x(k_0)|  <\varepsilon.$$ Since $\displaystyle \left(\sum_{1\leq n\in \Gamma_x}^m \sigma_n(x(n)) \Delta(e_n)\right)_{m}$ is a Cauchy sequence and $X$ is a Banach space, the mapping $F$ is well defined, and it is clearly real linear.\smallskip

The previous arguments also show, via Proposition \ref{p algebraic ellinfty}, that for each $x\in S(c_0(\Gamma)),$ and for each natural $m$ with $1=\|x\|=\max\{|x(k)|: 1\leq k\leq m\}$, we have $$\left\| \sum_{1\leq n\in \Gamma_x}^m \sigma_n(x(n)) \Delta(e_n) \right\| =\left\| \Delta\left(\sum_{1\leq n\in \Gamma_x}^m x(n) e_n\right) \right\| =\left\| \sum_{1\leq n\in \Gamma_x}^m x(n) e_n \right\| \leq \|x\|=1.$$ Consequently, $\|F(x)\| \leq \|x\|=1,$ and thus $F$ is continuous and contractive.\smallskip

Proposition \ref{p algebraic ellinfty} implies that for every $n_1,\ldots, n_k\in \Gamma$ and every $\lambda_1,\ldots,\lambda_k\in \mathbb{C}$ with $\max\{|\lambda_1|,\ldots,|\lambda_k|\}=1$,
we have $$\Delta \left(\sum_{j=1}^{k} \lambda_j e_{n_j}\right) = \sum_{j=1}^{k} \sigma_{n_j}(\lambda_j) \Delta (e_{n_j})=F \left(\sum_{j=1}^{k} \lambda_j e_{n_j}\right).$$ Since every element in $S(c_0 (\Gamma))$ can be approximated in norm by elements in $S(c_0 (\Gamma))$ which are of the form $\displaystyle \sum_{j=1}^{k} \lambda_j e_{n_j}$ with $\max\{|\lambda_1|,\ldots,|\lambda_k|\}=1$, we deduce from the fact that $\Delta$ and $F$ are continuous that $F|_{S(c_0 (\Gamma))} = \Delta$, witnessing the desired conclusion.
\end{proof}

All technical results established above for $\mathcal{L}^{\infty}(\Gamma)$ remain valid when this space is replaced with $\ell_\infty^m$, so the above arguments in Theorem \ref{t c0 satisfies the Mazur-Ulam property} can be literally applied to obtain our last result.

\begin{theorem}\label{c finite dimensional ellinfty satisfies the Mazur-Ulam property} The finite dimensional complex space $\ell_{\infty}^{m}$ satisfies the Mazur-Ulam property, concretely, given a Banach space $X$, every surjective isometry $\Delta : S(\ell_{\infty}^{m})\to S(X)$ admits a unique extension to a surjective real linear isometry from $\ell_{\infty}^{m}$ to $X$; in particular $X$ is isometrically isomorphic to $\ell_{\infty}^{m}$.$\hfill\Box$\end{theorem}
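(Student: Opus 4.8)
The plan is to recycle the entire technical machinery of Sections 2 and 3 verbatim, after replacing $\mathcal{L}^{\infty}(\Gamma)$ by $\ell_\infty^m$, and then to repeat the construction carried out in the proof of Theorem \ref{t c0 satisfies the Mazur-Ulam property}. First I would confirm that every intermediate result transfers. The sets $A(n,\lambda)$ remain maximal weak$^*$-closed proper faces of $\mathcal{B}_{\ell_\infty^m}$ and maximal convex subsets of $S(\ell_\infty^m)$, so Lemma \ref{l existence of support functionals for the image of a face ellinfty} goes through unchanged, since Eidelheit's separation theorem is dimension-free. Lemmas \ref{l face -1 face ellinfty} and \ref{l eproperties of supports ellinfty} together with Proposition \ref{p kernel of the support ellinfty} use only St-sets, \cite[Corollary 2.2]{FangWang06} and characteristic functions of finite subsets of the index set, none of which requires $\Gamma$ to be infinite. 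Propositions \ref{p imaginary ellinfty}, \ref{p orthogonal have CM orthog images 2} and \ref{p algebraic ellinfty}, together with Lemma \ref{l characterization CMorthog in sphere} and Corollary \ref{c orthogonal have CM orthog images}, are purely finitary in their proofs, so they remain valid; in particular \eqref{eq Delta takes out complex or conjugate scalar at a coordinate} and the maps $\sigma_n$ are available in the finite dimensional setting.

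The one place where the original argument genuinely exploited the ambient space is Proposition \ref{p antipodes ellinfty}, whose proof freely chooses a coordinate $m\neq n$ and auxiliary vectors. However this is precisely the point addressed by Remark \ref{remark antipodes finite dimensional}: the antipodal identity $\Delta(-\mu e_n)=-\Delta(\mu e_n)$ follows at once from the original Tingley theorem \cite{Ting1987} for surjective isometries between unit spheres of finite dimensional normed spaces. Thus, rather than being an obstacle, this step becomes cleaner in the finite dimensional case, and it is the only substitution needed.

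With all the technical results in hand, I would define $F:\ell_\infty^m\to X$ by $F(x):=\sum_{n=1}^{m}\sigma_n(x(n))\,\Delta(e_n)$. Because the index set is finite, the delicate Cauchy-sequence argument from the proof of Theorem \ref{t c0 satisfies the Mazur-Ulam property} disappears entirely: $F$ is automatically well defined, and it is real linear since each $\sigma_n$ is either the identity or complex conjugation and the sum is finite. Complete $M$-orthogonality of $\{\Delta(e_1),\dots,\Delta(e_m)\}$, granted by Proposition \ref{p orthogonal have CM orthog images 2}, shows that $F$ is a real linear isometry onto its range, and the surjectivity of $\Delta$ forces $F$ to be onto, so that $X$ is isometrically isomorphic to $\ell_\infty^m$.

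Finally I would check that $F|_{S(\ell_\infty^m)}=\Delta$. Every $x\in S(\ell_\infty^m)$ is already a finite combination $x=\sum_{n=1}^m x(n)e_n$ with $\max_n|x(n)|=1$, so no density or approximation step is required; a single application of Proposition \ref{p algebraic ellinfty} to the nonzero coordinates of $x$ yields $\Delta(x)=\sum_{n}\sigma_n(x(n))\Delta(e_n)=F(x)$. Uniqueness of the extension is immediate, since a real linear map is determined by its restriction to the unit sphere. The hard part is therefore not any single estimate but the bookkeeping of verifying that none of the earlier proofs secretly invoked the infiniteness of $\Gamma$; once the antipodal step is supplied by Tingley's theorem, everything else is literally the $c_0(\Gamma)$ argument with a finite sum.
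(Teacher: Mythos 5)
Your proposal is correct and takes essentially the same approach as the paper: the paper's own proof of Theorem \ref{c finite dimensional ellinfty satisfies the Mazur-Ulam property} consists precisely of the observation that all technical results transfer to $\ell_{\infty}^{m}$ (with the antipodal step covered by Remark \ref{remark antipodes finite dimensional} via Tingley's theorem), so that the argument of Theorem \ref{t c0 satisfies the Mazur-Ulam property} applies literally, the finite index set rendering the Cauchy-sequence step unnecessary. Your write-up merely makes explicit the bookkeeping (well-definedness of $F$ as a finite sum, the isometry property via complete $M$-orthogonality, and agreement with $\Delta$ on the sphere via Proposition \ref{p algebraic ellinfty}) that the paper leaves implicit.
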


We conjecture that the complex spaces $\ell_{\infty}(\Gamma)$ and $C(K)$ also satisfy the Mazur-Ulam property, however our current technology is not enough to prove this affirmation.

\medskip\medskip

\textbf{Acknowledgements:} Authors partially supported by the Spanish Ministry of Economy and Competitiveness (MINECO) and European Regional Development Fund project no. MTM2014-58984-P and Junta de Andaluc\'{\i}a grants FQM194 and FQM375.\smallskip

Most of the results presented in this note were obtained during a visit of A.M. Peralta at Universidad de Almer{\'i}a in July 2017. He would like to thank his coauthors and the Department of Mathematics for their hospitality during his stay.

\end{document}